\theoremstyle{plain}
\newtheorem{theorem}{Theorem}[section]
\newtheorem{lemma}[theorem]{Lemma}
\newtheorem{proposition}[theorem]{Proposition}
\theoremstyle{definition}
\newtheorem{definition}[theorem]{Definition}
\newtheorem{remark}[theorem]{Remark}
\newtheorem{example}[theorem]{Example}
\DeclareMathOperator{\diag}{diag}
\DeclareMathOperator{\diff}{d\!}
\DeclareMathOperator{\transp}{T}
\DeclarePairedDelimiter{\abs}{\lvert}{\rvert}
\DeclarePairedDelimiter{\floor}{\lfloor}{\rfloor}
\newcommand{\suchthat}{\ifnum\currentgrouptype=16 \mathrel{}\middle|\mathrel{}\else\mid\fi}
\numberwithin{figure}{section}
\numberwithin{equation}{section}
\begin{document}

\setlength{\parskip}{1pt plus 1pt minus 1pt}

\setlist[enumerate, 1]{label={\textnormal{(\alph*)}}, ref={(\alph*)}, leftmargin=0pt, itemindent=*}

\title{The generic multiplicity-induced-dominancy property from retarded to neutral delay-differential equations: When delay-systems characteristics meet the zeros of Kummer functions}

\author{Islam Boussaada\thanks{Universit\'e Paris-Saclay, CNRS, CentraleSup\'elec, Inria, Laboratoire des signaux et syst\`emes, 91190, Gif-sur-Yvette, France. E-mails: \{first name.last name\}@l2s.centralesupelec.fr}~\thanks{Institut Polytechnique des Sciences Avanc\'ees (IPSA), 63 boulevard de Brandebourg, 94200 Ivry-sur-Seine, France.}, Guilherme Mazanti\footnotemark[1], Silviu-Iulian Niculescu\footnotemark[1]}

\maketitle

\begin{abstract}

In this paper, which is a direct continuation and generalization of the recent works by the authors  \cite{Boussaada2020Multiplicity, MBN-2021-JDE},  we show the validity of the \emph{generic multiplicity-induced-dominancy\/} property for a general class of linear functional differential equations with a single delay, including the retarded as well as the neutral cases. The result is based on an appropriate integral representation of the corresponding characteristic quasipolynomial functions involving some appropriate degenerate hypergeometric functions. 
\end{abstract}

\noindent \small \textbf{Keywords.} Time-delay equations, multiplicity-induced-dominancy, stability analysis, confluent hypergeometric functions, Kummer functions, spectral methods, root assignment.

\noindent \small \textbf{2020 Mathematics Subject Classification.} 34K35, 34K20, 93D15, 33C15, 33C90.

\hypersetup{pdftitle={}, pdfauthor={Guilherme Mazanti, Islam Boussaada, Silviu-Iulian Niculescu}, pdfkeywords={Time-delay equations, multiplicity-induced-dominancy, stability analysis, confluent hypergeometric functions, Kummer functions, spectral methods, root assignment}}

\bigskip

\noindent \textbf{Notation.} In this paper, $\mathbb N^\ast$ denotes the set of positive integers and $\mathbb N = \mathbb N^\ast \cup \{0\}$. The set of all integers is denoted by $\mathbb Z$ and, for $a, b \in \mathbb R$, we denote $\llbracket a, b\rrbracket = [a, b] \cap \mathbb Z$, with the convention that $[a, b] = \emptyset$ if $a > b$. For a complex number $s$, $\Re(s)$ and $\Im(s)$ denote its real and imaginary parts, respectively. The open left and right complex half-planes are the sets $\mathbb C_-$ and $\mathbb C_+$, respectively, defined by $\mathbb C_- = \{s \in \mathbb C \suchthat \Re(s) < 0\}$ and $\mathbb C_+ = \{s \in \mathbb C \suchthat \Re(s) > 0\}$.

Given $k, n \in \mathbb N$ with $k \leq n$, the binomial coefficient $\binom{n}{k}$ is defined as $\binom{n}{k} = \frac{n!}{k! (n-k)!}$ and this notation is extended to $k, n \in \mathbb Z$ by setting $\binom{n}{k} = 0$ when $n < 0$, $k < 0$, or $k > n$.  For $\alpha \in \mathbb C$ and $k \in \mathbb N$, $(\alpha)_k$ is the \emph{Pochhammer symbol} for the \emph{ascending factorial}, defined inductively as $(\alpha)_0 = 1$ and $(\alpha)_{k+1} = (\alpha+k) (\alpha)_k$.

Finally, for the sake of simplicity in the formulations, we consider that the indices of rows and columns of matrices start from $0$. More precisely, given $n, m \in \mathbb N^\ast$, an $n \times m$ matrix $A$ is described by its coefficients $A_{jk}$ for integers $j, k$ with $0 \leq j < n$ and $0 \leq k < m$.

\section{Introduction}

This paper addresses the asymptotic behavior of the generic delay-differential equation (DDE):
\begin{equation}
\label{MainSystTime}
y^{(n)}(t) + a_{n-1} y^{(n-1)}(t) + \dotsb + a_0 y(t) + \alpha_{m} y^{(m)}(t - \tau) + \dotsb + \alpha_0 y(t - \tau) = 0,
\end{equation}
where the unknown function $y$ is real-valued, $n$ is a positive integer, $m$ is a nonnegative integer such that $m\leq n$, $a_k, \alpha_l \in \mathbb R$ for $k \in \llbracket 0, n-1\rrbracket$ and $l \in \llbracket 0, m\rrbracket$ are constant coefficients, and $\tau > 0$ is a delay. Under the assumption that $\alpha_m\neq 0$, equation \eqref{MainSystTime} is a delay-differential equation which is said of \emph{retarded} type if $m<n$ and of \emph{neutral} type if $m=n$ (see, e.g., \cite{Hale1993Introduction} and references therein).

For linear dynamical systems including delays in their model representation, spectral methods can be used to understand the asymptotic behavior of solutions by considering the roots of some characteristic function (see, e.g., \cite{Hale1993Introduction, Michiels2014Stability, Bellman1963Differential, Cooke1986Zeroes,  Stepan1989Retarded, Wright1961Stability}) which, for \eqref{MainSystTime}, is the function $\Delta: \mathbb C \to \mathbb C$ defined for $s \in \mathbb C$ by
\begin{equation}
\label{Delta}
\Delta(s) = s^n + \sum_{k=0}^{n-1} a_k s^k + e^{-s\tau} \sum_{k=0}^{m} \alpha_k s^k.
\end{equation}
More precisely, the exponential behavior of solutions of \eqref{MainSystTime} is given by the real number $\gamma_0 = \sup\{\Re(s) \suchthat s \in \mathbb C,\; \Delta(s) = 0\}$, called the \emph{spectral abscissa} of $\Delta$, in the sense that, for every $\varepsilon > 0$, there exists $C > 0$ such that, for every solution $y$ of \eqref{MainSystTime}, one has $\abs{y(t)} \leq C e^{(\gamma_0 + \varepsilon) t} \max_{\theta \in [-\tau, 0]} \abs{y(\theta)}$ \cite[Chapter~1, Theorem~6.2 and Corollary~7.2]{Hale1993Introduction}. Moreover, all solutions of \eqref{MainSystTime} converge exponentially to $0$ if and only if $\gamma_0 < 0$. An important difficulty in the analysis of the asymptotic behavior of \eqref{MainSystTime} is that, contrarily to the delay-free case, the corresponding characteristic function $\Delta$ has infinitely many roots.

In all generality, the problem of characterizing the domain in the space of the equation's parameters that guarantee the exponential stability of solutions is a question of ongoing interest, see for instance \cite{Michiels2007Stability}. Since the 1950s, the complete understanding of the first-order retarded scalar differential equation with a single delay has benefited from the seminal work by Hayes \cite{Hayes}, where the author gives a complete characterization of the rightmost spectral value location as a function of the delay and system's parameters. More recently, several works exploited Hayes results in control problems such as in delayed feedback and in stabilization problems. Unfortunately, Hayes approach remains complicated and natural extensions to nonscalar or higher-order retarded or neutral DDEs do not exist.
 
In control theory, the first pole-placement paradigm for time-delay systems, called \emph{finite pole placement}, was introduced in the late 1970s in \cite{Olbrot, ManitiusOlbrot}, where a prediction of the state over a delay interval is set to counteract the effect of the delay, hence reducing the closed-loop system to a finite-dimensional plant. For an exhaustive presentation of the main ideas as well as some comparisons with other methods devoted to the control of dynamical systems including input delays, we refer to \cite{wang-et-al:99}. Later, such an idea was deeply investigated in a more appropriate algebraic setting by \cite{Brethe} thanks to the introduction of the ring $\mathcal{E}$ defined as the set of all the meromorphic functions in the complex plane $\mathbb{C}$ that are of the form of $P(s, e^{-h \, s})/Q(s)$, where $Q$ is a polynomial in the Laplace complex variable $s$, $P$ is a bivariate polynomial in $s$ and $e^{-h \, s}$, and $h$ is a fixed positive real number.  Indeed, the major issue for the algebraic design of controllers of linear time-invariant differential time-delay systems is the algorithmic study of the ring $\mathcal{E}$.
Furthermore, in practice, the limitation of such a paradigm was remarked during the early 2000s in \cite{Engelborghs} through the numerical design of a delayed controller theoretically able to stabilize a dynamical system described by a first-order scalar differential equation but leading to a closed-loop system whose stability is extremely sensitive to infinitesimal uncertainties. Such an instability mechanism, which is known as the \emph{spillover} problem, is explained by the infinitely many spectral values generated by the discretization of the predictor. Furthermore, these fortuitous spectral values dominate in part the  assigned spectral value. 

Pole placement for delay systems goes beyond of a quasipolynomial interpolation problem. For instance, it has been shown in the 1980s by Ackermann \cite{Ackermann1972} that $n$ poles of the system can be assigned to (some) desired positions in the complex plane by $n$ feedback parameters in the same way as in the finite-dimensional case. However, it is well known that such an interpolation is an effective placement if and only if the remaining spectral values of the closed-loop system are located to the left of the assigned poles, thus avoiding the spillover problem. In other words, such a strategy works if the assigned poles are dominant. But, as pointed out by  \cite{VMZ2010}, such a property is in general not guaranteed as commented in \cite{RAMLAA}, where the proposed pole placement relies on rather heuristic trial-and-error placement of the dominant poles, making the pole placement procedure repeated with a different selection of assigned poles without any attempt to prove analytically the corresponding property.

To the best of our knowledge, the first ``automated'' pole placement for retarded time-delay systems is the numerical paradigm known as \emph{continuous pole placement} introduced in \cite{Michiels2002}. Unlike finite spectrum assignment, continuous pole placement does not render the closed-loop system finite-dimensional, but consists instead of controlling the corresponding rightmost eigenvalues. Such an idea represents a simple generalization of the pole placement for finite-dimensional systems represented by ordinary differential equations. It is based on the continuous dependence of the characteristic roots on the controller parameters, and the control strategy can be summarized as follows: ``Shift'' the unstable characteristic roots from $\mathbb{C}_+$ to $\mathbb{C}_-$ in a ``quasi-continuous'' way subject to the strong constraint that, during this shifting action, stable characteristic roots are not crossing the imaginary axis from $\mathbb{C}_-$ to $\mathbb{C}_+$. We refer to \cite{Michiels2014Stability} and references therein for further insights on the number of controlled characteristic roots (which is related to the available degrees of freedom induced by the controller structure) as well as the interpretation of continuous pole placement as a local strategy to solve an appropriate optimization problem where the objective function (rightmost root) is not differentiable. It is worth mentioning that continuous pole placement, initially applied to delay systems of retarded type, was extended to neutral systems in \cite{MV2005}.

A more recent pole placement analytical paradigm ensues from an observation on the effect of multiple spectral values on the stability of DDEs, a property called \emph{multiplicity-induced-dominancy} (MID) in \cite{Boussaada2020Multiplicity}. Indeed, some works have shown that, for some classes of time-delay systems, a real root of maximal multiplicity is necessarily the rightmost root, a property we call \emph{generic multiplicity-induced-dominancy}, or GMID for short. This link between maximal multiplicity and dominance has been suggested in \cite{Pinney1958Ordinary} after the study of some simple, low-order cases, but without any attempt to address the general case. To the best of the authors' knowledge, very few works have considered this question in more details until recently in works such as \cite{Boussaada2016Multiplicity,Boussaada2020Multiplicity,Boussaada2018Further, Ramirez2016Design,MBN-2021-JDE, MBNC-2021-MTNS, Benarab2020MID, mazantiComplex}. These works consider only DDEs with a single delay and show either the MID or the GMID property for each system under consideration. For instance, the MID or GMID properties are shown to hold for retarded equations of order $1$ in \cite{Boussaada2016Multiplicity}, which proves dominance by introducing a factorization of $\Delta$ in terms of an integral expression when it admits a root of maximal multiplicity $2$; for retarded equations of order $2$ with a delayed term of order zero in \cite{Boussaada2018Further}, using also the same factorization technique; or for retarded equations of order $2$ with a delayed term of order $1$ in \cite{Boussaada2020Multiplicity}, where both the MID and the GMID properties are investigated, using Cauchy's argument principle to prove dominance of the multiple root. Most of these results are actually particular cases of a more general result on the GMID property from \cite{MBN-2021-JDE} for generic retarded DDEs of order $n$ with delayed ``term'' (polynomial) of order $n-1$ (i.e. $m=n-1$ in \eqref{MainSystTime}), which relies on links between quasipolynomials with a real root of maximal multiplicity and the Kummer confluent hypergeometric function in terms of the location of the characteristic roots. The GMID property was also extended to neutral DDEs of orders $1$ and $2$ in \cite{Ma,Benarab2020MID, MBNC-2021-MTNS}, as well as to the case of complex conjugate roots of maximal multiplicity in \cite{mazantiComplex}.

The fact that a spectral value achieves maximum multiplicity imposes algebraic constraints on each of the system's ``entries'' (polynomial coefficients as well as the delay parameter). An MID-based approach is proposed in \cite{Balogh20,Balogh21} operating the intimate representation of the quasipolynomial to provide conditions for one spectral value with an eligible intermediate multiplicity. This makes it possible to split the system parameters into two categories, some of them considered as model parameters (assumed to be fixed and known) and the remaining ones considered as values to be adjusted. Such a classification opens interesting perspectives in control design, such as the systematic tuning of the gains of the well-known Proportional-Integral-Derivative (PID) controller, able to stabilize single-input/single-output plants including one delay in the input/output channel, as suggested in \cite{Ma}.

The contributions of this paper are fourfold. First, we propose an \emph{unified analytical framework\/} for the characterization of the \emph{generic multiplicity-induced-dominancy\/} of dynamical systems represented by DDEs in both retarded and neutral cases. The proposed methodology makes use of some \emph{degenerate hypergeometric functions} and the existing links between such functions and the factorization of the characteristic functions of DDEs. In this sense, a first step was proposed by the same authors in \cite{MBN-2021-JDE}, where only the special case of retarded equations with $m = n - 1$ was treated by using a result by Wynn \cite{Wynn1973Zeros} on the location of zeros of some Kummer confluent hypergeometric functions, the latter being proved using a representation of a quotient of Kummer functions as continued fractions. However, the underlying methodology from \cite{Wynn1973Zeros} seems hard to extend to more general retarded cases or to the neutral one, and for this reason the present paper is based on a different approach.

Second, our method is constructive and allows a better understanding of the existing links between the location of the characteristic root with generic maximal multiplicity, the so-called rightmost root, i.e., the spectral abscissa of the corresponding system, and the parameters of the dynamical systems. In particular, in some cases, it may be useful to minimize the spectral abscissa with respect to some of the tuning parameters with a guarantee of the exponential stability of the zero solution if the characteristic root satisfying the GMID property is located in $\mathbb{C}_-$.

Third, in terms of control, as mentioned above, the paradigm of \emph{generic multiplicity-induced-dominancy\/} opens the perspective to a new control method, the so-called \emph{partial pole placement}. More precisely, the GMID idea may be particularly adapted for tuning low-complexity controllers, i.e., controllers including a small number of parameters (including also the delay among the parameters) with a guarantee on the location of the remaining characteristic roots for the closed-loop system. For illustrating such an idea, some examples of PID controllers including a delay in the input/output channel are proposed for low-order systems in both retarded and neutral cases.   

Finally, the ideas above can be tested and illustrated by using the software ``Partial pole placement via delay action'', or P3$\delta$\footnote{\url{https://cutt.ly/p3delta}} for short, which offers the computation of the values of the free parameters (typically tuning the controller gains) ensuring a prescribed multiplicity, establishes a certified assignment region for the rightmost root, performs a numerical computation to illustrate the distribution of the quasipolynomial characteristics, offers the possibility of a numerical study of the sensitivity of the spectrum with respect to uncertain parameters variations, and is also able to perform appropriate time-domain simulations, see  for instance  \cite{BoussaadaPartial}. New functionalities are now available covering both retarded and neutral cases.

The remaining of the paper is organized as follows. Section~\ref{SecPrelim} presents some prerequisites in complex analysis. It starts by recalling some qualitative properties of quasipolynomials  and states some technical results on the distribution of zeros of some degenerate hypergeometric functions. The main results are presented in Section~\ref{SecMainResults}, where the GMID property is proved for the general retarded and neutral differential equations with a single delay. These results, which are implemented in the control design software  P3$\delta$, helped us to study standard examples in the design of stabilizing proportional-integral-derivative controllers as in \cite{Ma}. Section~\ref{SecIllustrative} provides some illustrations of the ideas above applied to a couple of comprehensive control problems. More precisely, the stabilization of the classical pendulum and the feedback stabilization for a scalar conservation law with PI boundary control, a description of the new features of the P3$\delta$ software as well as some  further remarks on the optimization of the trivial solution decay are presented. Some concluding remarks end the paper.

\section{Preliminaries and prerequisites}
\label{SecPrelim}

This section recalls some preliminary results on properties of quasipolynomial functions (Section~\ref{SecQuasipoly}), the distribution of zeros of Laplace transform of Bernstein polynomials (Section~\ref{LaplaceZero}), and properties and the distribution of nonasymptotic zeros of degenerate hypergeometric functions (Section~\ref{SecHypergeom}).

\subsection{Quasipolynomials}
\label{SecQuasipoly}

Let us start by recalling the classical definition of 
a quasipolynomial, also known as exponential polynomial,
and its degree (see, e.g., \cite{Wielonsky2001Rolle, Berenstein1995Complex}).

A \emph{quasipolynomial} $Q$ is an entire function $Q: \mathbb C \to \mathbb C$ which can be written under the form
\begin{equation}
\label{GenericQuasipolynomial}
Q(s) = \sum_{k = 0}^{\ell} P_k(s) e^{\sigma_k s},
\end{equation}
where $\ell$ is a nonnegative integer, $\sigma_0, \dotsc, \sigma_\ell$ are pairwise distinct real numbers, and, for $k \in \llbracket 0, \ell\rrbracket$, $P_k$ is a nonzero polynomial of degree $d_k \geq 0$ with coefficients belonging to $\mathbb{C}$. The integer $\mathscr{D}_{PS} = \ell + \sum_{k=0}^\ell d_k$ 
is called the \emph{degree} of $Q$.
 When $\sigma_0 = 0$ and $\sigma_k < 0$ for $k \in \llbracket 1, \ell\rrbracket$ in the above definition, $Q$ is the characteristic function of a linear time-delay system with delays $-\sigma_1, \dotsc, -\sigma_\ell$, which motivates our study of such quasipolynomials.

Notice that, if the coefficients of a quasipolynomial $Q$ are real, which is the case in \eqref{Delta}, then the corresponding zeros are necessarily either real or complex-conjugate pairs.
The roots of a quasipolynomial do not change when its coefficients are all multiplied by the same nonzero number, and hence one may always assume, without loss of generality, that one nonzero coefficient of a quasipolynomial is normalized to $1$, such as the coefficient of the term of highest degree in $P_0$, which is the case in \eqref{Delta}, for instance. The degree of a quasipolynomial is the number of the remaining coefficients. Since each polynomial $P_k$ of degree $d_k$ has $d_k+1$ coefficients, the quasipolynomial degree is then the sum of these numbers discounting the normalized coefficient, giving rise to the number $\mathscr{D}_{PS} = \sum_{k=0}^\ell (d_k + 1) - 1 = \ell + \sum_{k = 0}^\ell d_k$.

It is worth to note that a quasipolynomial admits an infinite number of roots, except in trivial cases when the quasipolynomial reduces to a polynomial. Fortunately, there does exist a link between the degree of a quasipolynomial and the number of its roots in horizontal strips of the complex plane, thanks to a classical result known as \emph{P\'{o}lya--Szeg\H{o} bound} (see \cite[Part Three, Problem~206.2]{Polya1998Problems}), which we state in the next proposition.

\begin{proposition}
\label{PropPolyaSzego}
Let $Q$ be a quasipolynomial of degree $\mathscr{D}_{PS}$ given under the form \eqref{GenericQuasipolynomial}, $\alpha, \beta \in \mathbb R$ be such that $\alpha \leq \beta$, and $\sigma_\delta = \max_{j, k \in \llbracket 0, \ell\rrbracket} \sigma_j - \sigma_k$. Let $m_{\alpha, \beta}$ denote the number of roots of $Q$ contained in the set $\{s \in \mathbb C \suchthat \alpha \leq \Im(s) \leq \beta\}$ counting multiplicities. Then
\[
\frac{\sigma_\delta (\beta - \alpha)}{2 \pi} - \mathscr{D}_{PS} \leq m_{\alpha, \beta} \leq \frac{\sigma_\delta (\beta - \alpha)}{2 \pi} + \mathscr{D}_{PS}.
\]
\end{proposition}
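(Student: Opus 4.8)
The plan is to establish both inequalities at once by applying the argument principle to $Q$ on a large rectangle and estimating the variation of $\arg Q$ along each of its four sides. Reordering the exponents we may assume $\sigma_0 < \sigma_1 < \dotsb < \sigma_\ell$, so that $\sigma_\delta = \sigma_\ell - \sigma_0$. It suffices to treat the case in which $Q$ has no zero on the lines $\{\Im(s) = \alpha\}$ and $\{\Im(s) = \beta\}$, the general case following by applying this case to the slightly larger strips $\{\alpha - \varepsilon \le \Im(s) \le \beta + \varepsilon\}$ and letting $\varepsilon \downarrow 0$ — legitimate because $m_{\alpha,\beta}$ is a fixed integer and coincides with $m_{\alpha - \varepsilon, \beta + \varepsilon}$ for all small $\varepsilon$. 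Writing $Q(s) = e^{\sigma_\ell s}\bigl(P_\ell(s) + \sum_{k < \ell} P_k(s) e^{(\sigma_k - \sigma_\ell)s}\bigr)$, the parenthesis tends to $P_\ell(s)$ uniformly on $\{\alpha \le \Im(s) \le \beta\}$ as $\Re(s) \to +\infty$ (since $\sigma_k - \sigma_\ell < 0$), so $Q$ has no zero in that strip with $\Re(s)$ large; factoring out $e^{\sigma_0 s}$ gives the same for $\Re(s)$ large negative. Hence, for all large $A$, every zero of $Q$ in the strip lies in the rectangle $R_A := (-A, A) \times (\alpha, \beta)$ and $Q$ does not vanish on $\partial R_A$, so by the argument principle $m_{\alpha,\beta}$ equals $\frac{1}{2\pi}$ times the variation of $\arg Q$ along $\partial R_A$ traversed counterclockwise.

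Along the two vertical sides I would use $\frac{Q'(s)}{Q(s)} = \sigma_\ell + \frac{d}{ds}\log\bigl(P_\ell(s) + \sum_{k < \ell} P_k(s) e^{(\sigma_k - \sigma_\ell)s}\bigr)$, which tends to $\sigma_\ell$ uniformly for $\Im(s) \in [\alpha, \beta]$ as $\Re(s) \to +\infty$, and symmetrically tends to $\sigma_0$ as $\Re(s) \to -\infty$. Since the variation of $\arg Q$ along the right side equals $\int_\alpha^\beta \Re\bigl(\frac{Q'(A + iy)}{Q(A + iy)}\bigr)\,dy$ and along the left side equals $\int_\beta^\alpha \Re\bigl(\frac{Q'(-A + iy)}{Q(-A + iy)}\bigr)\,dy$, the two vertical sides together contribute $\sigma_\delta(\beta - \alpha) + \eta(A)$ with $\eta(A) \to 0$ as $A \to \infty$.

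The crux is the two horizontal sides. For $c \in \{\alpha, \beta\}$ write $Q(x + ic) = u_c(x) + i\,v_c(x)$; since $e^{\sigma_k(x + ic)} = e^{i\sigma_k c} e^{\sigma_k x}$, each of $u_c, v_c$ is, as a function of the real variable $x$, a real exponential polynomial with exponents among $\sigma_0, \dotsc, \sigma_\ell$ and polynomial coefficients of degree at most $d_0, \dotsc, d_\ell$. Here I would invoke the classical bound — proved by induction on $\ell$ (divide by the lowest-frequency exponential, differentiate $d_0 + 1$ times to remove the resulting polynomial term, and apply Rolle's theorem, which controls the loss in the zero count) — that such a real exponential polynomial has at most $\mathscr{D}_{PS}$ real zeros. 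The real zeros of $v_c$ partition $\mathbb R$ into finitely many intervals on each of which $v_c = \Im Q(\cdot + ic)$ has constant sign, so that $Q(\cdot + ic)$ remains in one of the open half-planes $\{\Im w > 0\}$, $\{\Im w < 0\}$ and a continuous determination of $\arg Q(\cdot + ic)$ varies by less than $\pi$ on each; combined with the radial escape of $Q(x + ic)$ as $x \to \pm\infty$ (again governed by the extreme terms $P_0 e^{\sigma_0 s}$, $P_\ell e^{\sigma_\ell s}$), this bounds the variation of $\arg Q$ along each horizontal side of $R_A$ by $\pi\mathscr{D}_{PS}$, up to an error vanishing as $A \to \infty$. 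Combining the three estimates and letting $A \to \infty$ (then $\varepsilon \downarrow 0$) yields $\bigl|2\pi m_{\alpha,\beta} - \sigma_\delta(\beta - \alpha)\bigr| \le 2\pi\mathscr{D}_{PS}$, which is the claim.

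I expect the main obstacle to be precisely the horizontal estimate with the sharp constant $\mathscr{D}_{PS}$ rather than $\mathscr{D}_{PS} + 1$: cutting $\mathbb R$ by the sign changes of $v_c$ alone produces at most $\mathscr{D}_{PS} + 1$ intervals and naively costs almost $(\mathscr{D}_{PS} + 1)\pi$ per side because of the two unbounded end intervals. Squeezing this down to $\pi\mathscr{D}_{PS}$ requires also using the sign changes of $u_c$ and the precise limiting directions of $Q(x + ic)$ at $x = \pm\infty$, together with the observation that whenever an end interval can carry close to a full $\pi$ — that is, when a leading coefficient of $u_c$ or $v_c$ degenerates — the relevant real-zero count drops accordingly. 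This bookkeeping, along with the inductive proof of the Rolle-type zero bound (the essential quantitative input), is where the real work lies; the vertical-side estimate and the argument-principle reduction are routine.
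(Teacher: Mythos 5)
The paper does not actually prove this proposition: it is quoted as the classical P\'olya--Szeg\H{o} bound, with the proof attributed to Obreschkoff via the argument principle. So you are reconstructing an external proof, and your overall strategy (argument principle on a large rectangle, asymptotic control of $Q'/Q$ on the vertical sides, a Rolle-type zero count for real exponential sums on the horizontal sides) is indeed the right one; the reduction to the case of no zeros on the boundary lines, the vertical-side computation, and the inductive zero bound for $\sum_k p_k(x)e^{\sigma_k x}$ are all fine.

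The genuine gap is exactly where you locate it, and your proposed repair does not work. As written, your horizontal-side estimate gives at most $(\mathscr{D}_{PS}+1)\pi$ per side (at most $\mathscr{D}_{PS}$ zeros of $v_c$ produce up to $\mathscr{D}_{PS}+1$ sign-constancy intervals), and the resulting strict inequality $\abs{2\pi m_{\alpha,\beta}-\sigma_\delta(\beta-\alpha)}<2(\mathscr{D}_{PS}+1)\pi$ does not survive the limits $A\to\infty$ and $\varepsilon\downarrow 0$; since $\sigma_\delta(\beta-\alpha)/(2\pi)$ need not be an integer, you only obtain the constant $\mathscr{D}_{PS}+1$. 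This weaker constant is not enough for the paper, which uses the sharp bound with $\alpha=\beta$ to cap root multiplicities at $\mathscr{D}_{PS}$ (Remark~\ref{REM-1REV} and the discussion after Proposition~\ref{PropPolyaSzego}). Moreover, the mechanism you invoke to recover the sharp constant --- that an end interval can only carry nearly $\pi$ when a leading coefficient of $u_c$ or $v_c$ degenerates and the zero count drops --- is false: if the limiting direction $\psi_+=\lim_{x\to+\infty}\arg Q(x+ic)$ is congruent to $\pi-\epsilon$ modulo $\pi$, the last interval contributes nearly $\pi$ while the relevant leading coefficient of $v_c$, proportional to $\sin\epsilon$, is nonzero, so no degeneration occurs. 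The correct way to squeeze the per-side bound down to $\mathscr{D}_{PS}\pi$ is a multi-direction argument: for every $\theta$, the function $x\mapsto\Im\bigl(e^{-i\theta}Q(x+ic)\bigr)$ is again a real exponential sum of the same shape, hence has at most $\mathscr{D}_{PS}$ real zeros unless it vanishes identically (which happens for at most one $\theta$ mod $\pi$). If the net variation of $\arg Q(\cdot+ic)$ exceeded $\mathscr{D}_{PS}\pi+\pi/k$, the intermediate value theorem applied to the $k$ equally spaced directions $\theta=j\pi/k$ would produce more than $k\mathscr{D}_{PS}$ distinct crossing points, so by pigeonhole one of these $k$ exponential sums would have at least $\mathscr{D}_{PS}+1$ zeros, a contradiction; letting $k\to\infty$ gives the bound $\mathscr{D}_{PS}\pi$ per horizontal side and hence the stated constant. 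Without this (or an equivalent) step your proof establishes only the weakened inequality.
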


Notice that the above result was first introduced and claimed in the problems collection published in 1925 by G.~P\'{o}lya and G.~Szeg\H{o}. In the fourth edition of their book \cite[Part Three, Problem~206.2]{Polya1998Problems}, G.~P\'{o}lya and G.~Szeg\H{o} emphasized that the proof was obtained in the meantime by N.~Obreschkoff using the principle argument, see \cite{Obreschkoff}.

As an immediate consequence of Proposition~\ref{PropPolyaSzego}, given a root $s_0 \in \mathbb C$ of a quasipolynomial $Q$ of degree $\mathscr{D}_{PS}$, by letting $\beta = \alpha = \Im(s_0)$ in the statement of Proposition~\ref{PropPolyaSzego}, one concludes that $s_0$ has multiplicity at most $\mathscr{D}_{PS}$. Hence, for a quasipolynomial $\Delta$ under the form \eqref{Delta}, which is of degree $m+n+1$, any of its roots has multiplicity at most $m+n+1$. 

Since quasipolynomial functions admits an infinite number of zeros, an important information on the distribution of such zeros is the location of the corresponding dominant or rightmost root. 
Let us consider a function $Q: \mathbb C \to \mathbb C$ and a value $s_0 \in \mathbb C$ such that $Q(s_0) = 0$. We say that $s_0$ is a \emph{dominant} (respectively, \emph{strictly dominant}) root of $Q$ if, for every $s \in \mathbb C \setminus \{s_0\}$ such that $Q(s) = 0$, one has $\Re(s) \leq \Re(s_0)$ (respectively, $\Re(s) < \Re(s_0)$).

The next lemma, which is a key ingredient to simplify the proofs of our main results in Section~\ref{SecMainResults}, describes how the coefficients of a quasipolynomial change under a translation and a dilation of the complex plane. Its proof, omitted here, can be carried out by straightforward computations and is very similar to that of \cite[Lemma~4.1]{MBN-2021-JDE}.

\begin{lemma}
\label{LemmDeltaTilde}
Let $s_0 \in \mathbb R$, $\Delta$ be the quasipolynomial from \eqref{Delta}, and consider the quasipolynomial $\widetilde\Delta: \mathbb C \to \mathbb C$ obtained from $\Delta$ by the change of variables $z = \tau(s - s_0)$ and multiplication by $\tau^n$, i.e.,
\begin{equation}
\label{DefiDeltaTilde}
\widetilde\Delta(z) = \tau^n \Delta\left(s_0 + \tfrac{z}{\tau}\right).
\end{equation}
Then
\begin{equation}
\label{DeltaTilde}
\widetilde\Delta(z) = z^n + \sum_{k=0}^{n-1} b_k z^k + e^{-z} \sum_{k=0}^{m} \beta_k z^k,
\end{equation}
where
\begin{equation}
\label{RelationBA}
\left\{
\begin{aligned}
b_k & = \binom{n}{k} \tau^{n-k} s_0^{n-k} + \tau^{n-k} \sum_{j=k}^{n-1} \binom{j}{k} s_0^{j-k} a_j, & \quad & \textnormal{for every } k \in \llbracket 0, n-1\rrbracket, \\
\beta_k & =  \tau^{n - k} e^{- s_0 \tau} \sum_{j=k}^{m} \binom{j}{k} s_0^{j - k} \alpha_j, & & \textnormal{for every } k \in \llbracket 0, m\rrbracket.
\end{aligned}
\right.
\end{equation}
\end{lemma}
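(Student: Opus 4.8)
The plan is to substitute the change of variables directly into \eqref{Delta} and expand everything with the binomial theorem, collecting powers of $z$. Writing $s = s_0 + \frac z\tau$, the exponential factor separates as $e^{-s\tau} = e^{-(s_0 + z/\tau)\tau} = e^{-s_0\tau} e^{-z}$, so the factor $e^{-z}$ appearing in \eqref{DeltaTilde} comes out for free and it only remains to identify the polynomial coefficients multiplying $1$ and $e^{-z}$. Since $s_0 \in \mathbb R$ and the $a_j$, $\alpha_j$ are real, all the coefficients produced below are real, so $\widetilde\Delta$ has the same shape as $\Delta$; and since multiplication by $\tau^n \neq 0$ and the affine bijection $s \mapsto \tau(s - s_0)$ do not alter zeros, $z_0$ is a root of $\widetilde\Delta$ of multiplicity $\mu$ precisely when $s_0 + \frac{z_0}{\tau}$ is a root of $\Delta$ of multiplicity $\mu$ (this is the property the lemma is really being set up to provide).

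First I would treat the non-delayed part. Using $\bigl(s_0 + \tfrac z\tau\bigr)^j = \sum_{k=0}^j \binom jk s_0^{j-k} \tau^{-k} z^k$, one gets
\[
\tau^n\Bigl[\bigl(s_0+\tfrac z\tau\bigr)^n + \sum_{j=0}^{n-1} a_j \bigl(s_0+\tfrac z\tau\bigr)^j\Bigr] = z^n + \sum_{k=0}^{n-1} b_k z^k,
\]
where the coefficient of $z^n$ is $\tau^n \tau^{-n} = 1$ and, for $k \in \llbracket 0, n-1\rrbracket$, the term $\bigl(s_0 + \tfrac z\tau\bigr)^n$ contributes $\tau^n \binom nk s_0^{n-k}\tau^{-k} = \binom nk \tau^{n-k} s_0^{n-k}$ to $b_k$, while each $a_j\bigl(s_0 + \tfrac z\tau\bigr)^j$ contributes $\tau^{n-k}\binom jk s_0^{j-k} a_j$, with $j$ effectively ranging over $\llbracket k, n-1\rrbracket$ because $\binom jk = 0$ for $j < k$; summing these matches the first line of \eqref{RelationBA}.

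Next I would do the same for the delayed part: $\tau^n e^{-s_0\tau}\sum_{j=0}^m \alpha_j \bigl(s_0 + \tfrac z\tau\bigr)^j = e^{-s_0\tau}\sum_{j=0}^m \alpha_j \sum_{k=0}^j \binom jk s_0^{j-k}\tau^{n-k} z^k$, and interchanging the two finite sums gives $\sum_{k=0}^m \bigl(\tau^{n-k} e^{-s_0\tau}\sum_{j=k}^m \binom jk s_0^{j-k}\alpha_j\bigr) z^k$, which identifies $\beta_k$ as in the second line of \eqref{RelationBA}. Adding the two parts and factoring $e^{-z}$ out of the delayed one yields \eqref{DeltaTilde}. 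Finally, the assumption $\alpha_m \neq 0$ gives $\beta_m = \tau^{n-m} e^{-s_0\tau}\alpha_m \neq 0$, confirming that $\widetilde\Delta$ is again a quasipolynomial of degree $m+n+1$ of the form \eqref{DeltaTilde}.

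The computation is entirely elementary, so there is no genuine obstacle; the only step that requires a little care is the bookkeeping of the powers of $\tau$, namely the $\tau^n$ prefactor against the $\tau^{-k}$ coming from the binomial expansion of $\bigl(s_0 + \tfrac z\tau\bigr)^j$, together with the (immediate) justification of interchanging the order of the finite double sums. This is why the proof can be omitted here, the argument being a direct transcription of that of \cite[Lemma~4.1]{MBN-2021-JDE}.
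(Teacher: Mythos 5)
Your proof is correct and is exactly the straightforward computation the paper has in mind: it omits the proof, describing it as a direct calculation along the lines of \cite[Lemma~4.1]{MBN-2021-JDE}, and your substitution of $s = s_0 + \tfrac{z}{\tau}$, binomial expansion, and exchange of the finite double sums reproduces \eqref{RelationBA} exactly. The added remarks on preservation of root multiplicities and on $\beta_m \neq 0$ are not part of the lemma but are harmless and consistent with how the lemma is used later.
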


The relations between the coefficients $b_0, \dotsc, b_{n-1}, \beta_0, \dotsc, \beta_{m}$ and $a_0,\allowbreak \dotsc,\allowbreak a_{n-1},\allowbreak \alpha_0,\allowbreak \dotsc,\allowbreak \alpha_{m}$ expressed in \eqref{RelationBA} can be expressed under matrix form as
\[
b = T_{n} a + v, \qquad \beta = \tau^{n - m - 1} e^{-s_0 \tau} T_{m+1} \alpha,
\]
where
\[
b = \begin{pmatrix}
b_0 \\
\vdots \\
b_{n-1} \\
\end{pmatrix}, \quad \beta = \begin{pmatrix}
\beta_0 \\
\vdots \\
\beta_{m} \\
\end{pmatrix}, \quad a = \begin{pmatrix}
a_0 \\
\vdots \\
a_{n-1} \\
\end{pmatrix}, \quad \alpha = \begin{pmatrix}
\alpha_0 \\
\vdots \\
\alpha_{m} \\
\end{pmatrix}, \quad v = \begin{pmatrix}
\binom{n}{0} \tau^n s_0^n \\
\binom{n}{1} \tau^{n-1} s_0^{n-1} \\
\vdots \\
\binom{n}{n-1} \tau s_0
\end{pmatrix},
\]
and, for $ k \in \mathbb N^\ast$, the matrix $T_k \in \mathcal M_k(\mathbb R)$ is defined by
\begin{equation*}
T_k = \begin{pmatrix}
\binom{0}{0} \tau^k & \binom{1}{0} \tau^k s_0 & \binom{2}{0} \tau^k s_0^2 & \cdots & \binom{k-2}{0} \tau^k s_0^{k-2} & \binom{k-1}{0} \tau^k s_0^{k-1} \\
0 & \binom{1}{1} \tau^{k-1} & \binom{2}{1} \tau^{k-1} s_0 & \cdots & \binom{k-2}{1} \tau^{k-1} s_0^{k-3} & \binom{k-1}{1} \tau^{k-1} s_0^{k-2} \\
0 & 0 & \binom{2}{2} \tau^{k-2} & \cdots & \binom{k-2}{2} \tau^{k-2} s_0^{k-4} & \binom{k-1}{2} \tau^{k-2} s_0^{k-3} \\
\vdots & \vdots & \vdots & \ddots & \vdots & \vdots \\
0 & 0 & 0 & \cdots & \binom{k-2}{k-2} \tau^{2} & \binom{k-1}{k-2} \tau^{2} s_0 \\
0 & 0 & 0 & \cdots & 0 & \binom{k-1}{k-1} \tau \\
\end{pmatrix}.
\end{equation*}
Noticing that the confluent functional Vandermonde matrix $T_k$ is invertible for every $k \in \mathbb N^\ast$, we may thus express $a$ and $\alpha$ in terms of $b$ and $\beta$ as
\begin{equation}
\label{RelationABMatrix}
a = T_{n}^{-1}(b - v), \qquad \alpha = \tau^{m + 1 - n} e^{s_0 \tau} T_{m+1}^{-1} \beta.
\end{equation}
As computed in \cite[Lemma~4.2]{MBN-2021-JDE}, the inverse of $T_k$ is the matrix $T_k^{-1} = (S^{(k)}_{j, \ell})_{j, \ell \in \llbracket 0, k-1\rrbracket}$ whose coefficients are given, for $j, \ell \in \llbracket 0, k-1\rrbracket$, by
\[
S^{(k)}_{j, \ell} = \begin{dcases*}
0, & if $j > \ell$, \\
(-1)^{\ell-j} \binom{\ell}{j} \frac{1}{\tau^{k-\ell}} s_0^{\ell - j}, & if $j \leq \ell$.
\end{dcases*}
\]
As a consequence, we can provide explicit expressions for $a_0, \dotsc, a_{n-1}, \alpha_0, \dotsc, \alpha_m$ in terms of $b_0, \dotsc, b_{n-1}, \beta_0, \dotsc, \beta_m$, which are given in the next lemma. Its proof, omitted here, follows the same lines as that of \cite[Lemma~4.2]{MBN-2021-JDE} and is based on explicit computations from \eqref{RelationABMatrix} and the identity $\sum_{j=k}^{n-1} (-1)^{j - k} \binom{j}{k} \binom{n}{j} = - (-1)^{n - k} \binom{n}{k}$, which can be found, e.g., in \cite[Proposition~2.11]{MBN-2021-JDE}.

\begin{lemma}
\label{LemmCoeffsInverseTransform}
Let $\tau > 0$, $s_0 \in \mathbb R$, and $a_0, \dotsc, a_{n-1}, \alpha_0, \dotsc, \alpha_{n-1}, b_0, \dotsc, b_{m}, \beta_0, \dotsc, \beta_{m}$ be real numbers satisfying \eqref{RelationBA}. Then, the following equalities hold:
\begin{equation*}
\left\{
\begin{aligned}
a_k & = \binom{n}{k} (-s_0)^{n - k} + \sum_{j=k}^{n-1} (-1)^{j-k} \binom{j}{k} \frac{s_0^{j-k}}{\tau^{n-j}} b_j, & \quad & \textnormal{for every } k \in \llbracket 0, n-1\rrbracket, \\
\alpha_k & = e^{s_0 \tau} \sum_{j=k}^{m} (-1)^{j-k} \binom{j}{k} \frac{s_0^{j-k}}{\tau^{n-j}} \beta_j, & & \textnormal{for every } k \in \llbracket 0, m\rrbracket.
\end{aligned}
\right.
\end{equation*}
\end{lemma}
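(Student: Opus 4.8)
The plan is to obtain both families of identities by expanding componentwise the two matrix--vector products gathered in \eqref{RelationABMatrix}, using the closed form of the entries $S^{(k)}_{j,\ell}$ of $T_k^{-1}$ recalled just above the statement. No new ingredient is needed: once the bookkeeping is set up, the first identity will rest on the combinatorial relation $\sum_{j=k}^{n-1}(-1)^{j-k}\binom{j}{k}\binom{n}{j} = -(-1)^{n-k}\binom{n}{k}$ quoted before the statement, and the second one on nothing more than arithmetic of powers of $\tau$.

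First I would treat the coefficients $a_k$. Starting from $a = T_n^{-1}(b - v)$ and using that $S^{(n)}_{k,\ell} = 0$ whenever $\ell < k$, one gets, for each $k \in \llbracket 0, n-1\rrbracket$,
\[
a_k = \sum_{\ell=k}^{n-1} (-1)^{\ell-k}\binom{\ell}{k}\frac{s_0^{\ell-k}}{\tau^{n-\ell}}\,(b_\ell - v_\ell).
\]
The part carrying the $b_\ell$ is already the sum displayed in the statement, after renaming $\ell$ as $j$. In the part carrying $v_\ell = \binom{n}{\ell}\tau^{n-\ell}s_0^{n-\ell}$ the powers of $\tau$ cancel, leaving $-\,s_0^{n-k}\sum_{\ell=k}^{n-1}(-1)^{\ell-k}\binom{\ell}{k}\binom{n}{\ell}$; applying the quoted identity turns this contribution into $(-1)^{n-k}\binom{n}{k}s_0^{n-k} = \binom{n}{k}(-s_0)^{n-k}$, which is exactly the polynomial term in the first claimed identity.

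Next I would treat the coefficients $\alpha_k$, which is lighter. From $\alpha = \tau^{m+1-n}e^{s_0\tau}\,T_{m+1}^{-1}\beta$ and the vanishing of $S^{(m+1)}_{k,\ell}$ for $\ell < k$, one has, for each $k \in \llbracket 0, m\rrbracket$,
\[
\alpha_k = \tau^{m+1-n}e^{s_0\tau}\sum_{\ell=k}^{m}(-1)^{\ell-k}\binom{\ell}{k}\frac{s_0^{\ell-k}}{\tau^{m+1-\ell}}\,\beta_\ell = e^{s_0\tau}\sum_{\ell=k}^{m}(-1)^{\ell-k}\binom{\ell}{k}\frac{s_0^{\ell-k}}{\tau^{n-\ell}}\,\beta_\ell,
\]
since $\tau^{m+1-n}/\tau^{m+1-\ell} = \tau^{-(n-\ell)}$, which is precisely the second claimed identity, and here no combinatorial identity is needed. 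The whole argument is routine; the only point that is not purely mechanical is recognizing that the contribution of the vector $v$ in the computation of $a_k$ telescopes, through the binomial identity above, exactly into $\binom{n}{k}(-s_0)^{n-k}$. This is expected on structural grounds — a translation by $s_0$ of the variable expands the polynomial part according to $(s+s_0)^n = \sum_k \binom{n}{k}s_0^{n-k}s^k$ — so I anticipate no genuine obstacle beyond careful index handling.
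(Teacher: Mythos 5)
Your proof is correct and takes essentially the same route the paper indicates for its omitted proof: componentwise expansion of \eqref{RelationABMatrix} using the explicit entries $S^{(k)}_{j,\ell}$ of $T_k^{-1}$, with the quoted identity $\sum_{j=k}^{n-1}(-1)^{j-k}\binom{j}{k}\binom{n}{j}=-(-1)^{n-k}\binom{n}{k}$ converting the contribution of $v$ into $\binom{n}{k}(-s_0)^{n-k}$. Both the index bookkeeping and the cancellation of the powers of $\tau$ in the $\alpha_k$ computation check out.
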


\subsection{Zeros' distribution of finite Laplace transform}\label{LaplaceZero}

We discuss in this section Laplace transforms of some integrable functions $f$ concentrated on a finite support, which, without loss of generality, can be taken as the interval $(0, 1)$, i.e.,
\begin{equation}\label{LAP}
    F(s)=\int_0^1 e^{s\,t}\, f(t) \diff t.
\end{equation}

Before turning to the core of this section, we present the following result on the integral of the product of a polynomial and an exponential, which is rather simple but of crucial importance to prove our main result. Its proof is straightforward and can also be found in \cite[Proposition~2.1]{MBN-2021-JDE}.

\begin{proposition}
\label{PropIntegralPExp}
Let $d \in \mathbb N$ and $p$ be a polynomial of degree at most $d$. Then, for every $z \in \mathbb C \setminus \{0\}$,
\begin{equation*}
\int_0^1 p(t) e^{-zt} \diff t = \sum_{k=0}^d \frac{p^{(k)}(0) - p^{(k)}(1) e^{-z}}{z^{k+1}}.
\end{equation*}
\end{proposition}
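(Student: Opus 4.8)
The plan is to prove the identity by induction on $d$, with the inductive step being a single integration by parts. For the base case $d = 0$, the polynomial $p$ is a constant, say $p \equiv c$, and one computes directly $\int_0^1 c\, e^{-zt} \diff t = \frac{c}{z}(1 - e^{-z}) = \frac{p(0) - p(1) e^{-z}}{z}$, which is exactly the claimed right-hand side since $p^{(0)} = p$ and $p(0) = p(1) = c$. The hypothesis $z \neq 0$ is precisely what makes this (and the general) computation legitimate.

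For the inductive step, assume the formula holds for every polynomial of degree at most $d$, and let $p$ have degree at most $d+1$. Integrating by parts with $u = p(t)$ and $\diff v = e^{-zt}\diff t$ gives
\[
\int_0^1 p(t) e^{-zt} \diff t = \left[ -\tfrac{1}{z} p(t) e^{-zt} \right]_{t=0}^{t=1} + \frac{1}{z} \int_0^1 p'(t) e^{-zt} \diff t = \frac{p(0) - p(1) e^{-z}}{z} + \frac{1}{z} \int_0^1 p'(t) e^{-zt} \diff t.
\]
Since $p'$ has degree at most $d$, the induction hypothesis applies to it, and using $(p')^{(k)} = p^{(k+1)}$ one obtains $\int_0^1 p'(t) e^{-zt}\diff t = \sum_{k=0}^d \frac{p^{(k+1)}(0) - p^{(k+1)}(1) e^{-z}}{z^{k+1}}$. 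Substituting this and shifting the summation index from $k$ to $k+1$ turns the last term into $\sum_{k=1}^{d+1} \frac{p^{(k)}(0) - p^{(k)}(1) e^{-z}}{z^{k+1}}$, which, combined with the boundary term regarded as the $k = 0$ summand, yields $\sum_{k=0}^{d+1} \frac{p^{(k)}(0) - p^{(k)}(1) e^{-z}}{z^{k+1}}$, completing the induction.

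There is essentially no serious obstacle: the argument is a routine integration by parts, and the only point requiring a little care is the bookkeeping in the reindexing of the sum. It is also worth noting that the identity is consistent across admissible choices of $d$: if $d > \deg p$, then $p^{(k)} \equiv 0$ for $k > \deg p$, so the extra summands vanish and the value of the right-hand side is unaffected. Alternatively, one could dispense with the induction by invoking linearity of the integral and of both sides in $p$, reducing to the case of a single monomial $p(t) = t^m$ with $m \le d$; but proving the monomial case still amounts to the same repeated integration by parts, so the inductive presentation above is the most economical.
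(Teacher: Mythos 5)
Your proof is correct, and it follows the standard route (induction on the degree via a single integration by parts) that the paper itself has in mind when it declares the proof ``straightforward'' and defers it to \cite[Proposition~2.1]{MBN-2021-JDE}. The base case, the reindexing of the sum, and the remark that extra vanishing summands make the identity independent of the admissible choice of $d$ are all handled correctly.
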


The question of locating the zeros of functions $F$ under the form \eqref{LAP} is not new and was for instance the subject of several pioneering works by Hardy \cite{hardy1905}, P\'{o}lya \cite{polya1918}, and Titchmarsh \cite{titchmarsh1926}. Furthermore, beyond its strict mathematical importance, the location of zeros of \eqref{LAP} is related to a wide range of problems coming from physics and engineering. Despite having been established more than a century ago, the following result from \cite{polya1918} remains relevant and can be found in \cite[Part Five, Chapter~3, Problem~177, page~66]{PSII} (see also \cite{sedletskii2004} for further related results).

\begin{theorem}[G.~P\'{o}lya, 1918]\label{Polya}
Let $f$ be a positive and continuously differentiable function defined in the interval $[0, 1]$ and satisfying $f^\prime(t) < 0$ for every $t \in [0, 1]$. Consider the function $F: \mathbb C \to \mathbb C$ defined from $f$ as in \eqref{LAP}. Then all the zeros of $F$ lie in the open right half-plane $\mathbb C_+$.
\end{theorem}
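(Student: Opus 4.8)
The plan is to argue by contradiction, taking advantage of the elementary fact that $F$ has no real zeros at all: for real $s$ the integrand $e^{st}f(t)$ in \eqref{LAP} is strictly positive on $[0,1]$, hence $F(s)>0$ (in particular $F(0)=\int_0^1 f(t)\,\diff t>0$). It therefore suffices to exclude a zero $s_0$ of $F$ with $\Re(s_0)\le 0$ and $\Im(s_0)\neq 0$; such an $s_0$ is automatically nonzero.

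So suppose $F(s_0)=0$ with $\Re(s_0)\le 0$ and $\Im(s_0)\neq 0$. First I would integrate by parts in \eqref{LAP} — legitimate since $s_0\neq 0$ and $f\in C^1([0,1])$, and essentially the one-step manipulation that, iterated, underlies Proposition~\ref{PropIntegralPExp} — to obtain
\[
s_0 F(s_0) = e^{s_0} f(1) - f(0) - \int_0^1 e^{s_0 t} f'(t)\,\diff t .
\]
Setting $g=-f'$, which is continuous and strictly positive on $[0,1]$, and using $F(s_0)=0$, this becomes $f(0)=e^{s_0}f(1)+\int_0^1 e^{s_0 t}g(t)\,\diff t$. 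Next I would pass to moduli: since $\Re(s_0)\le 0$ one has $\abs{e^{s_0 t}}=e^{t\Re(s_0)}\le 1$ for all $t\in[0,1]$ and $\abs{e^{s_0}}\le 1$, so
\[
f(0)\le \abs{e^{s_0}}\,f(1) + \abs*{\int_0^1 e^{s_0 t}g(t)\,\diff t} \le f(1) + \int_0^1 e^{t\Re(s_0)}g(t)\,\diff t \le f(1) + \int_0^1 g(t)\,\diff t = f(0),
\]
where the last equality is just $\int_0^1 g = f(0)-f(1)$. Hence every inequality in this chain is in fact an equality.

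The decisive consequence is that $\abs*{\int_0^1 e^{s_0 t}g(t)\,\diff t}=\int_0^1 \abs{e^{s_0 t}g(t)}\,\diff t$: since $t\mapsto e^{s_0 t}g(t)$ is continuous and nowhere vanishing on $[0,1]$, equality in the triangle inequality for the integral forces its argument to be constant on $[0,1]$, whereas that argument equals $t\,\Im(s_0)$ modulo $2\pi$, which is non-constant as soon as $\Im(s_0)\neq 0$ — a contradiction. Equivalently, one may bypass the equality discussion and observe directly that, when $\Im(s_0)\neq 0$, the map $t\mapsto e^{s_0 t}$ has non-constant argument, so $\abs*{\int_0^1 e^{s_0 t}g(t)\,\diff t}<\int_0^1 e^{t\Re(s_0)}g(t)\,\diff t$ strictly, which already breaks the chain. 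The one point that deserves care is precisely this equality-case (or strict-inequality) analysis of the integral triangle inequality, and it is also where the hypotheses enter with full strength: $f$ is $C^1$ with $f'<0$ on the \emph{closed} interval $[0,1]$, so $g=-f'$ is continuous and bounded away from zero there. The remaining ingredients — the reduction to nonreal $s_0$, the integration by parts, and the modulus estimates — are entirely routine.
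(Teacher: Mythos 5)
Your argument is correct. Note, though, that the paper does not actually prove Theorem~\ref{Polya}: it is quoted as a classical result of P\'olya and attributed to \cite{polya1918} and \cite[Part Five, Chapter~3, Problem~177]{PSII}, so there is no in-paper proof to compare against. What you supply is a complete, self-contained elementary proof in the classical spirit: real zeros are excluded by positivity of the integrand; for a putative zero $s_0$ with $\Re(s_0)\le 0$ and $\Im(s_0)\neq 0$, a single integration by parts (valid since $s_0\neq 0$ and $f\in C^1$) yields $f(0)=e^{s_0}f(1)+\int_0^1 e^{s_0t}g(t)\,\diff t$ with $g=-f'>0$ continuous, and the chain of modulus estimates closes back to $f(0)$ via $\int_0^1 g=f(0)-f(1)$, forcing equality throughout. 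The decisive step — that equality in $\abs{\smash{\int_0^1 e^{s_0t}g}}\le\int_0^1 e^{t\Re(s_0)}g$ fails because the phase $t\mapsto e^{it\Im(s_0)}$ is non-constant while $g$ is continuous and nowhere zero — is handled correctly, and you rightly identify it as the only place where the full strength of the hypotheses ($f'<0$ on the closed interval) is used. The reduction, the integration by parts, and the estimates are all sound; no gaps.
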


The assumptions on $f$ in Theorem~\ref{Polya} (namely, positivity and monotonicity of the function $f$) are quite restrictive for our purpose. Similar results in locating the zeros of \eqref{LAP} can be obtained when $f$ is a \emph{Bernstein polynomial} \cite{bernstein1912}, that is,
\begin{equation}\label{BernStein}
 f(t)=\binom{m}{n}\,t^m\,(1-t)^n \qquad\text{where}\quad m,n\in\mathbb{N},
 \end{equation} see also \cite{BARRIO2004} for further insights on Bernstein polynomials. The following simple example provides an illustration of that fact.

\begin{example}[Zeros' frequency bound approach]\label{LaplaceBernstein}
Consider the function $F: \mathbb C \to \mathbb C$ defined for $s \in \mathbb C$ by
\begin{equation}\label{LAPB}
    F(s)=\int_0^1 t\,(1-t)^2\,e^{s\,t} \diff t.
\end{equation}
We will show that all zeros of $F$ lie in the open right half-plane $\mathbb C_+$.

Notice first that, using Proposition \ref{PropIntegralPExp}, one obtains
\begin{equation}\label{Ipp}
   F(s)= {\frac { \left( 2\,s-6 \right) {{e}^{s}}+{s}^{2}+4\,s+6}{{s}^{4}}}.
\end{equation}
Obviously, $F$ is an analytic function since the numerator of the right-hand side of \eqref{Ipp} admits a root at zero with multiplicity four. Noticing that this numerator is a quasipolynomial of degree $4$, we deduce, as a consequence of Proposition~\ref{PropPolyaSzego}, that the function $F$ does not admit any real root. One can also show that no zero of  $F$ lies on the imaginary axis. As a matter of fact,  assume that there exists a root $s_0$ of $F$ such that  $s_0=i\,\omega_0$ with $\omega_0\neq 0$. Then
\begin{equation*}
s^4\,F(i\,\omega_0)=-6\,\cos \left( \omega_0 \right) -2\,\omega_0\,\sin \left( \omega_0 \right) 
-{\omega_0}^{2}+6+i \left( 2\,\omega_0\,\cos \left( \omega_0 \right) -6\,
\sin \left( \omega_0 \right) +4\,\omega_0 \right)=0.
\end{equation*}
 The vanishing of the corresponding real and imaginary parts  allows to eliminate the trigonometric functions as rational expressions of the crossing frequency $\omega_0$.
 Namely, one obtains
 \begin{equation*}
 \cos \left( \omega_0 \right) ={\frac {-7\,\omega_0^{2}+18}{2\,
\omega_0^{2}+18}},\;\sin \left( \omega_0 \right) =-{\frac {\omega_0\, \left( 
\omega_0^{2}-18 \right) }{2\,\omega_0^{2}+18}}.
\end{equation*}
Further, using the standard property $\sin^2(\omega_0)+\cos^2(\omega_0)=1$, one deduces that $\omega_0$ must satisfy $\omega_0^4 (\omega_0^2 + 9) = 0$, which is impossible since $\omega_0 \neq 0$, yielding the required conclusion.

We are left to show that $F$ has no zeros with negative real part. To investigate the potential roots of $F$ with negative real parts we rather investigate the roots of $s \mapsto F(-s)$ with positive real parts. Indeed, define the auxiliary function $G: \mathbb C \to \mathbb C$ by
\begin{equation}\label{OIPP}
    G(s)=e^s\,{s}^{4}\,F(-s)=\left( {s}^{2}-4\,s+6 \right) {{e}^{s}}-2\,s-6.
\end{equation}
Obviously, any nontrivial zero $s_0=\xi+i\,\omega$ of $G$ satisfies $F(-s_0)=0$. We will first show that any such $s_0$ with positive real part must necessarily satisfy $0 < \abs{\omega} < \pi$.

Since $\xi$ is assumed to be positive, we have $e^{2 \xi} > 1 + 2 \xi$ and then one deduces from equation \eqref{OIPP} that the pair $(\xi,\omega)$  necessarily satisfies
\begin{equation*}
\abs*{s_0^{2}-4\,s_0+6}^{2}\left(1+2\xi\right)-4\,\abs*{s_0+3}^{2}<0
\end{equation*}
which, using the notation $\Omega=\omega^2$, is equivalent to $H_\xi(\Omega)<0$, where
\begin{equation*}
  H_\xi(\Omega)=\left( 1+2\,\xi \right) {\Omega}^{2}+ 2\left( 2\,{\xi}-7 \right)\,{\xi}^{
2}\, \Omega+\left(2\,{\xi}^{3}-15\,{\xi}^{2}+48\,{\xi}-72\right)\,{\xi}^{
2}.
\end{equation*}
Since the leading coefficient of the polynomial $H_\xi$ is positive, then the inequality $H_\xi(\Omega)<0$ holds if and only if $H_\xi$ admits two distinct real roots and $\Omega$ belongs to the open interval whose extremities are those two roots. The polynomial $H_\xi$ admits two distinct real roots if and only if $\xi\in\left(0,\,3\left(1+\sqrt{2}\right)/2\right)$, and in that case those roots are $\Omega_{\pm}={ {\xi} \left(7\,\xi -2\,{\xi}^{2}\pm 2\,\sqrt {-8\,{\xi}^{2}+24\,\xi+18} \right)/\left({1+2\,\xi}\right) }$. Notice that $\Omega_-$ is necessarily negative and, since $\Omega > 0$, only $\Omega_+$ is of interest. Using the fact that $\sqrt {
-8\,{\xi}^{2}+24\,\xi+18} \leq 6$ one easily shows that, for every $\xi\in\left(0,\,3\left(1+\sqrt{2}\right)/2\right)$, we have the inequality $\Omega_+\leq {{ \left( -2\,{\xi}^{2}+7\,\xi+12 \right) \xi}/\left({1+2\,\xi}\right)}<\pi^2$, which finally implies that $\abs{\omega} < \pi$, as required.

We can now use the fact that any root $s_0 = \xi + i \omega$ of $G$ with $\xi > 0$ satisfies $0 < \abs{\omega} < \pi$ to deduce that no such root can exist. Indeed, taking the imaginary part of the equality $F(-s_0) = 0$ and using \eqref{LAPB}, we get
\begin{equation*}
    0=\int_{0}^{1}{{e}^{-t\xi}}\sin \left(\omega t \right) t \left( 1-t
 \right) ^{2}\diff t.
\end{equation*}
The fact that $\abs{\omega} < \pi$ implies that the integrand is positive, yielding the result. Hence all roots $s$ of \eqref{Ipp} satisfy $\Re(s)>0$.
\end{example}

The location of roots of functions of type \eqref{BernStein} with indices $m$ and $n$ which are not necessarily integers remains possible thanks to the existing link between Bernstein polynomials with \emph{degenerate hypergeometric functions}, which is the topic of our next session.

\subsection{Degenerate hypergeometric functions}
\label{SecHypergeom}

As it will be proved in Section~\ref{SecMaxMultAndFact}, when the quasipolynomial $\Delta$ from \eqref{Delta} admits a root of maximal multiplicity $m+n+1$, it can be represented in terms of a \emph{Kummer confluent hypergeometric function}, whose nontrivial roots coincide with the nontrivial roots of another special function, known as \emph{Whittaker function}. These families of special functions have been extensively studied in the literature, with, in particular, a wide range of results on the properties of the asymptotic distribution of their zeros (see, e.g., \cite{Buchholz1969Confluent}, \cite[Chapter~VI]{Erdelyi1981Higher}, \cite[Chapter~13]{Olver2010NIST}). For the purposes of our paper, however, we need information on the location of all zeros of such functions, and not only their asymptotic distribution, a subject that has been considered in fewer works in the literature (we refer the interested reader to the introduction of \cite{BMN-2021-NM} for a more detailed description on these questions). This section provides a brief presentation of the results that shall be of use in the sequel. We start by recalling the definition of Kummer confluent hypergeometric functions used in this paper.

\begin{definition}
Let $a, b \in \mathbb C$ and assume that $b$ is not a nonpositive integer. The {Kummer confluent hypergeometric function} $\Phi(a, b, \cdot): \mathbb C \to \mathbb C$ is the entire function defined for $z \in \mathbb C$ by the series
\begin{equation}
\label{DefiConfluent}
\Phi(a, b, z) = \sum_{k=0}^{\infty} \frac{(a)_k}{(b)_k} \frac{z^k}{k!},
\end{equation}
where we recall that, for $\alpha \in \mathbb C$ and $k \in \mathbb N$, $(\alpha)_k$ is the Pochhammer symbol (defined in the ``Notation'' section).
\end{definition}

\begin{remark}
Note that the series in \eqref{DefiConfluent} converges for every $z \in \mathbb C$. As presented in \cite{Buchholz1969Confluent, Erdelyi1981Higher, Olver2010NIST}, the function $\Phi(a, b, \cdot)$ satisfies the \emph{Kummer differential equation}
\begin{equation}
\label{KummerODE}
z \frac{\partial^2 \Phi}{\partial z^2}(a, b, z) + (b - z) \frac{\partial \Phi}{\partial z}(a, b, z) - a \Phi(a, b, z) = 0,
\end{equation}
which has a regular singular point at $z = 0$ and an irregular singular point at $z =\infty$.
It is well known that \eqref{KummerODE} admits two  linearly independent solutions, both of them usually called Kummer confluent hypergeometric functions. For our purpose we are concerned only with the solution \eqref{DefiConfluent}.
\end{remark}

We shall need the following classical integral representation of $\Phi$, which can be found, for instance, in \cite{Buchholz1969Confluent, Erdelyi1981Higher, Olver2010NIST, DRISSI2019}.

\begin{proposition}
Let $a, b \in \mathbb C$ and assume that $\Re(b) > \Re(a) > 0$. Then, for every $z \in \mathbb C$, we have
\begin{equation}
\label{EqKummerIntegral}
\Phi(a, b, z) = \frac{\Gamma(b)}{\Gamma(a) \Gamma(b - a)} \int_0^1 e^{zt} t^{a-1} (1-t)^{b-a-1} \diff t,
\end{equation}
where $\Gamma$ denotes the Gamma function.
\end{proposition}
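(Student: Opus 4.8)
The plan is to establish the identity by expanding the exponential in \eqref{EqKummerIntegral} as a power series, interchanging summation and integration, and recognizing each resulting integral as a Beta function. Fix $z \in \mathbb{C}$. First I would record that the integral on the right-hand side of \eqref{EqKummerIntegral} is well defined: the factor $e^{zt}$ is bounded on $[0, 1]$, while $\lvert t^{a-1} (1-t)^{b-a-1}\rvert = t^{\Re(a)-1} (1-t)^{\Re(b-a)-1}$ is integrable on $(0, 1)$ precisely because $\Re(a) > 0$ controls the singularity at $t = 0$ and $\Re(b-a) = \Re(b) - \Re(a) > 0$ controls the one at $t = 1$. These same two inequalities are exactly what make all the Beta integrals appearing below convergent.

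Next I would substitute $e^{zt} = \sum_{k=0}^{\infty} \frac{z^k t^k}{k!}$ into the integrand and justify exchanging the sum with the integral by applying Tonelli's theorem to the absolute values: the relevant quantity is $\sum_{k=0}^{\infty} \frac{\lvert z\rvert^k}{k!} \int_0^1 t^{\Re(a)+k-1} (1-t)^{\Re(b-a)-1} \diff t = \sum_{k=0}^{\infty} \frac{\lvert z\rvert^k}{k!} B\bigl(\Re(a)+k, \Re(b-a)\bigr)$, and since $B(\Re(a)+k, \Re(b-a)) = \frac{\Gamma(\Re(a)+k) \Gamma(\Re(b-a))}{\Gamma(\Re(a)+\Re(b-a)+k)}$ decays polynomially in $k$ by the standard asymptotics of the Gamma function, this series is dominated by a constant multiple of $\sum_k \lvert z\rvert^k / k!$ and hence converges. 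Consequently, $\int_0^1 e^{zt} t^{a-1} (1-t)^{b-a-1} \diff t = \sum_{k=0}^{\infty} \frac{z^k}{k!} \int_0^1 t^{a+k-1} (1-t)^{b-a-1} \diff t = \sum_{k=0}^{\infty} \frac{z^k}{k!} B(a+k, b-a)$, where each Beta integral is evaluated via $B(a+k, b-a) = \frac{\Gamma(a+k) \Gamma(b-a)}{\Gamma(b+k)}$, valid since $\Re(a+k) > 0$ and $\Re(b-a) > 0$.

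Finally, I would multiply by the prefactor $\frac{\Gamma(b)}{\Gamma(a) \Gamma(b-a)}$ and simplify using the Pochhammer identities $\frac{\Gamma(a+k)}{\Gamma(a)} = (a)_k$ and $\frac{\Gamma(b)}{\Gamma(b+k)} = \frac{1}{(b)_k}$ (the latter being valid because $b$ is not a nonpositive integer, so no factor among $b, b+1, \dotsc, b+k-1$ vanishes and all the Gamma values involved are finite and nonzero), obtaining $\frac{\Gamma(b)}{\Gamma(a) \Gamma(b-a)} \int_0^1 e^{zt} t^{a-1} (1-t)^{b-a-1} \diff t = \sum_{k=0}^{\infty} \frac{(a)_k}{(b)_k} \frac{z^k}{k!} = \Phi(a, b, z)$, which is exactly \eqref{DefiConfluent}. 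The only delicate point is the justification of the term-by-term integration, which is why I would route it through Tonelli on absolute values together with the Gamma-function asymptotics; everything else is a direct computation. An alternative route would be to verify that the right-hand side of \eqref{EqKummerIntegral} solves the Kummer equation \eqref{KummerODE} with the correct value and derivative at $z = 0$, but the series argument is shorter and self-contained.
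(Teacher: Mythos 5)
Your proof is correct. The paper does not actually prove this proposition — it is stated as a classical fact with citations to standard references on confluent hypergeometric functions — and your argument (expand $e^{zt}$ as a power series, justify the interchange of sum and integral via Tonelli using the bound $\lvert t^{a-1}(1-t)^{b-a-1}\rvert = t^{\Re(a)-1}(1-t)^{\Re(b-a)-1}$, evaluate each term as a Beta integral, and simplify with $\Gamma(a+k)/\Gamma(a) = (a)_k$) is exactly the canonical derivation found in those references, with the hypotheses $\Re(b) > \Re(a) > 0$ used precisely where they are needed. No gaps.
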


We also recall (see, e.g., \cite[(13.2.39)]{Olver2010NIST}), that, for every $a, b, z \in \mathbb C$ such that $b$ is not a nonpositive integer, we have
\begin{equation*}
\Phi(a, b, z) = e^z \Phi(b-a, b, -z).
\end{equation*}
The following result, which is proved in \cite{BMN-2021-NM} using the Green--Hille transformation from \cite{hille1922}, gives insights on the distribution of the nonasymptotic zeros of Kummer hypergeometric functions with real arguments $a$ and $b$. 

\begin{proposition}
\label{CorZerosKummer}
Let $a,\,b\in \mathbb R$ be such that $b\geq 2$.
\begin{enumerate}
\item\label{CorZerosKummer-k-eq-0} If $b = 2a$, then all nontrivial roots $z$ of $\Phi(a,b, \cdot)$ are purely imaginary. 
\item\label{CorZerosKummer-k-geq-0} If $b > 2a$, then all nontrivial roots $z$ of $\Phi(a,b, \cdot)$ satisfy $\Re(z) > 0$.
\item\label{CorZerosKummer-k-leq-0} If $b < 2a$, then all nontrivial roots $z$ of $\Phi(a,b, \cdot)$ satisfy $\Re(z) < 0$.
\item\label{CorZerosKummer-k-neq-0} If $b \neq 2a$, then all nontrivial roots $z$ of $\Phi(a,b, \cdot)$ satisfy
\[(b - 2a)^2 {\Im(z)}^2 - \left(4a(b-a) - 2b\right) {\Re(z)}^{2} > 0.\]
\end{enumerate}
\end{proposition}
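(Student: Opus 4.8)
The plan is to reduce Kummer's equation \eqref{KummerODE} to its normal (Liouville) form by the Green--Hille transformation and then run a Sonin--Pólya/Hille-type integration-by-parts argument -- in the spirit of Hille's treatment of parabolic cylinder functions -- along a carefully chosen segment. Concretely, set $v(z) = z^{b/2} e^{-z/2}\, \Phi(a, b, z)$. On any simply connected domain avoiding the origin, $v$ is holomorphic, it vanishes exactly where $\Phi(a, b, \cdot)$ does (because $z^{b/2} e^{-z/2}$ never vanishes there), and a direct computation turns \eqref{KummerODE} into the Whittaker-type equation $v'' + I(z) v = 0$ with invariant
\[
I(z) = -\frac14 + \frac{b - 2a}{2 z} - \frac{b (b - 2)}{4 z^2}.
\]
Since $b \geq 2$, the factor $z^{b/2}$ forces $v$ to extend continuously to the origin with $v(0) = 0$, and $\abs{v(z)}^2$ vanishes there like $\abs{z}^b$; this decay is what makes the integrals below converge.

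Next, fix a nonzero root $z_0 = r e^{i\phi}$ of $\Phi(a, b, \cdot)$, parametrize the radial segment $\gamma(t) = (1 - t) z_0$ joining $z_0$ (at $t = 0$) to $0$ (at $t = 1$), and put $u(t) = v(\gamma(t))$, so that $u(0) = u(1) = 0$ and $u'' = - z_0^2\, I(\gamma(t))\, u$. Integrating $\bar{u} u''$ by parts over $[0, 1]$ -- the boundary contribution at both endpoints vanishes because $u$ vanishes there and, for $b \geq 2$, $u$ and $u'$ stay bounded near $t = 1$ -- yields the identity
\[
\int_0^1 \abs{u'(t)}^2 \diff t = z_0^2 \int_0^1 I(\gamma(t))\, \abs{u(t)}^2 \diff t,
\]
whose left-hand side is a strictly positive real number since $u \not\equiv 0$ vanishes at the endpoints. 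Expanding $z_0^2 I(\gamma(t)) = -\tfrac{r^2 e^{2 i \phi}}{4} + \tfrac{(b - 2 a)\, r\, e^{i\phi}}{2 (1 - t)} - \tfrac{b (b - 2)}{4 (1 - t)^2}$ and setting $A = \int_0^1 \abs{u}^2 \diff t$, $B = \int_0^1 \tfrac{\abs{u}^2}{1 - t} \diff t$, $C = \int_0^1 \tfrac{\abs{u}^2}{(1 - t)^2} \diff t$ -- all finite and positive thanks to $b \geq 2$ -- the imaginary part of the identity reads $r \sin(2\phi)\, A = 2 (b - 2 a)\, \sin\phi\, B$, and the real part, once the first relation is used in the case $\sin\phi \neq 0$, reduces to $r^2 A > b (b - 2)\, C$ (in the case $\sin\phi = 0$ the real part is kept in its expanded form).

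The four items then follow by inspecting these two relations. If $\sin\phi \neq 0$ the imaginary-part relation becomes $r \cos\phi\, A = (b - 2 a)\, B$, so $\cos\phi$ has the sign of $b - 2 a$; this immediately gives the imaginary-axis location in \ref{CorZerosKummer-k-eq-0}, and the half-plane location in \ref{CorZerosKummer-k-geq-0} and \ref{CorZerosKummer-k-leq-0}, for all non-real roots. The real roots are disposed of separately: for \ref{CorZerosKummer-k-eq-0} and for the would-be wrong-sign real roots in \ref{CorZerosKummer-k-geq-0} and \ref{CorZerosKummer-k-leq-0}, the expanded real-part relation is manifestly negative as soon as $b (b - 2) \geq 0$, a contradiction. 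For \ref{CorZerosKummer-k-neq-0}, the identity $4 a (b - a) - 2 b = b (b - 2) - (b - 2 a)^2$ shows that the claimed inequality is equivalent to $\cos^2\phi < \tfrac{(b - 2 a)^2}{b (b - 2)}$ when $b > 2$ (and is automatic when $b = 2$, since then $b \neq 2a$ forces the left-hand side to equal $4 (a - 1)^2 \abs{z_0}^2 > 0$); for non-real roots one combines $\cos^2\phi = \tfrac{(b - 2 a)^2 B^2}{r^2 A^2}$ with the strict Cauchy--Schwarz inequality $B^2 < A C$ and with $r^2 A > b (b - 2)\, C$, while for real roots a Cauchy--Schwarz/AM--GM estimate applied to the expanded real-part relation gives $(b - 2 a)^2 > b (b - 2)$ directly.

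I expect the main obstacle to be the behaviour at the origin: there the invariant $I$ has a double pole and $v$ carries the branch-point factor $z^{b/2}$, so one must justify carefully that the segment integration by parts is legitimate -- that $A$, $B$, $C$ and $\int_0^1 \abs{u'}^2$ all converge and that the boundary term at $t = 1$ truly vanishes -- and this is precisely where the hypothesis $b \geq 2$ is used. A secondary, more bookkeeping-type point is that the imaginary-part relation degenerates for real roots, forcing the separate treatment sketched above, and the sign conventions in the Cauchy--Schwarz/AM--GM steps for \ref{CorZerosKummer-k-neq-0} need to be checked with care. One could also carry out the argument along the vertical segment joining $z_0$ to its conjugate $\bar z_0$ (which avoids the origin whenever $\Re(z_0) \neq 0$), but the radial segment has the advantage of handling all four items uniformly.
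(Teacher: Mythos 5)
Your proof is correct and takes essentially the same route as the paper, which does not prove Proposition~\ref{CorZerosKummer} itself but defers to \cite{BMN-2021-NM}, whose argument is precisely the Green--Hille device you employ: reduction of \eqref{KummerODE} to the Whittaker normal form $v''+I(z)v=0$, integration by parts along a segment terminating at the origin, and separation of real and imaginary parts. The one delicate point, which you correctly identify and resolve, is the convergence of the integrals $A$, $B$, $C$, $\int_0^1\abs{u'}^2$ and the vanishing of the boundary term at the origin, which is exactly where the hypothesis $b\geq 2$ is used.
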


\begin{remark}
In the case $a \in \{\alpha, \alpha + 1\}$ and $b = 2\alpha + 1$ for some $\alpha > -\frac{1}{2}$, the conclusions of Proposition~\ref{CorZerosKummer}\ref{CorZerosKummer-k-geq-0} and \ref{CorZerosKummer-k-leq-0} were shown by P.~Wynn in \cite[Theorem~1]{Wynn1973Zeros}. The techniques used in that reference do not rely on Hille's approach, but use instead a continued fraction representation of a ratio of Kummer functions (see \cite[Theorem~B(ii)]{Wynn1973Zeros}). Note that Proposition~\ref{CorZerosKummer} does not cover all cases of Wynn's result, since the assumption $b \geq 2$ is equivalent to $\alpha \geq \frac{1}{2}$, whereas Wynn's result is obtained for all $\alpha > -\frac{1}{2}$.
\end{remark}

\section{Main results}
\label{SecMainResults}

This section is dedicated to our main results on the location of zeros of the characteristic function $\Delta$ given in \eqref{Delta} of system \eqref{MainSystTime}.
Hereafter, we will characterize the manifold corresponding to the existence of a spectral value of \eqref{Delta} with multiplicity reaching the P\'{o}lya--Szeg\H{o} bound $\mathscr{D}_{PS}=m+n+1$ and will prove that such spectral value is necessarily dominant.

\subsection{Maximal multiplicity and quasipolynomial factorization}
\label{SecMaxMultAndFact}

The main result of this section is the following characterization of the existence of a real root attaining the maximal multiplicity $\mathscr D_{PS}$.

\begin{theorem}
\label{MaxMult}
Consider the quasipolynomial $\Delta$ given by \eqref{Delta} and let $s_0 \in \mathbb R$.
 The number $s_0$ is a root of multiplicity $\mathscr{D}_{PS}=m+n+1$ of $\Delta$ if and only if
\begin{equation}
\label{Coeffs}
\left\{
\begin{aligned}
a_k & = \binom{n}{k} \left( -s_{{0}} \right) ^{n-k}+
 \left( -1 \right) ^{n-k}n!\,\sum _{j=k}^{n-1}{\frac {\binom{j}{k} \binom{m+n
-j}{m} s_0^{j-k}}{j!\,{\tau}^{n-j}}} & \quad & \textnormal{for every } k \in \llbracket 0, n-1\rrbracket, \\
\alpha_k & =\left( 
-1 \right) ^{n-1}{{e}^{s_{{0}}\tau}}\sum _{j=k}^{m}{\frac {
 \left( -1 \right) ^{j-k} \left( m+n-j \right) !\,s_0^{j-k
}}{k!\, \left( j-k \right) !\, \left( m-j \right) !\,{\tau}^{n-j}}} & & \textnormal{for every } k \in \llbracket 0, m\rrbracket.
\end{aligned}
\right.
\end{equation}
\end{theorem}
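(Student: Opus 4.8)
The plan is to shift the root to the origin via Lemma~\ref{LemmDeltaTilde} and then to recognize the unique quasipolynomial of the prescribed form that has a root of maximal multiplicity there. Since $\widetilde\Delta(z)=\tau^n\Delta(s_0+z/\tau)$ with $\tau>0$, writing $\Delta(s)=(s-s_0)^\mu h(s)$ with $h$ analytic and $h(s_0)\neq 0$ shows that $s_0$ is a root of $\Delta$ of multiplicity $m+n+1$ if and only if $z=0$ is a root of $\widetilde\Delta$, given by \eqref{DeltaTilde}, of multiplicity $m+n+1$. Moreover the passage \eqref{RelationBA} from $(a,\alpha)$ to $(b,\beta)$ is a bijection whose inverse is the content of Lemma~\ref{LemmCoeffsInverseTransform}, so it suffices to show that $z=0$ is a root of $\widetilde\Delta$ of multiplicity $m+n+1$ if and only if
\begin{equation*}
b_k=\frac{(-1)^{n-k}(m+n-k)!}{m!}\binom{n}{k}\quad(0\le k\le n-1),\qquad \beta_k=\frac{(-1)^{n-1}(m+n-k)!}{m!}\binom{m}{k}\quad(0\le k\le m),
\end{equation*}
and then to check, by substituting these values into the identities of Lemma~\ref{LemmCoeffsInverseTransform}, that this is equivalent to \eqref{Coeffs}.

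For sufficiency I would produce the extremal quasipolynomial explicitly. Let $p(t)=t^m(1-t)^n$ be the relevant Bernstein polynomial and set $\widetilde\Delta_\star(z)=\frac{z^{m+n+1}}{m!}\int_0^1 p(t)e^{-zt}\diff t$. By Proposition~\ref{PropIntegralPExp}, for $z\neq 0$ and hence (both sides being entire) for all $z$,
\begin{equation*}
\widetilde\Delta_\star(z)=\frac1{m!}\sum_{k=0}^{m+n}z^{m+n-k}\bigl(p^{(k)}(0)-p^{(k)}(1)e^{-z}\bigr).
\end{equation*}
Because $t^m$ divides $p$ one has $p^{(k)}(0)=0$ for $k<m$ and $p^{(m)}(0)=m!$, and because $(1-t)^n$ divides $p$ one has $p^{(k)}(1)=0$ for $k<n$; hence $\widetilde\Delta_\star$ is exactly of the form \eqref{DeltaTilde}, with non-delayed part monic of degree $n$ and delayed part of degree at most $m$, and expanding $p$ about $0$ and about $1$ identifies its coefficients with the $b_k,\beta_k$ displayed above. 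On the other hand $\widetilde\Delta_\star(z)=z^{m+n+1}g(z)$ with $g$ entire and $g(0)=\frac1{m!}\int_0^1 t^m(1-t)^n\diff t=\frac{n!}{(m+n+1)!}\neq 0$, so $z=0$ is a root of $\widetilde\Delta_\star$ of multiplicity exactly $m+n+1$.

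For necessity and uniqueness, let $\widetilde\Delta$ be any quasipolynomial of the form \eqref{DeltaTilde} having $z=0$ as a root of multiplicity $m+n+1$, and consider $D=\widetilde\Delta-\widetilde\Delta_\star$. The $z^n$ terms cancel, so $D(z)=\sum_{k=0}^{n-1}c_kz^k+e^{-z}\sum_{k=0}^m\gamma_kz^k$, a quasipolynomial whose degree in the P\'{o}lya--Szeg\H{o} sense is at most $(n-1)+1+m=m+n$; since $D$ still vanishes at $z=0$ to order at least $m+n+1$, the multiplicity bound following from Proposition~\ref{PropPolyaSzego} (a root has multiplicity at most the quasipolynomial degree) forces $D\equiv 0$, i.e.\ $\widetilde\Delta=\widetilde\Delta_\star$ and $b_k,\beta_k$ take the values displayed above. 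Feeding these into Lemma~\ref{LemmCoeffsInverseTransform} and simplifying the binomial sums (via $\binom{j}{k}\binom{n}{j}=\binom{n}{k}\binom{n-k}{j-k}$, $\binom{j}{k}\binom{m}{j}=\binom{m}{k}\binom{m-k}{j-k}$ and $\binom{n}{j}\frac{(m+n-j)!}{m!}=\frac{n!}{j!}\binom{m+n-j}{m}$) returns precisely \eqref{Coeffs}. The main obstacle is this last bookkeeping step — reconciling the coefficients furnished by the finite Laplace transform of $p$, after the inverse confluent Vandermonde change of variables, with the closed forms in \eqref{Coeffs} — though it is routine; the conceptual heart of the argument is the uniqueness drawn from the P\'{o}lya--Szeg\H{o} bound, combined with the observation that the extremal quasipolynomial is simply $z^{m+n+1}$ times the finite Laplace transform of a Bernstein polynomial.
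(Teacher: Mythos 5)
Your proof is correct, and the overall architecture (shift the root to the origin via $z=\tau(s-s_0)$, exhibit the extremal quasipolynomial as $z^{m+n+1}/m!$ times the finite Laplace transform of the Bernstein polynomial $t^m(1-t)^n$, then translate back through Lemma~\ref{LemmCoeffsInverseTransform}) coincides with the paper's; the genuine difference lies in how uniqueness is established. The paper, in the proof of Lemma~\ref{LemmCoeffsBetaB}, converts the conditions $\widetilde\Delta^{(k)}(0)=0$ for $k\in\llbracket 0,m+n\rrbracket$ into the explicit linear system \eqref{eq:syst-b-beta}, isolates the subsystem $T\beta=-e_0$ with $T_{j,k}=(-1)^{n+j-k}/(n+j-k)!$, and proves invertibility of $T$ by an LU factorization of the associated binomial matrix. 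You instead form $D=\widetilde\Delta-\widetilde\Delta_\star$, note that the monic $z^n$ terms cancel so that $D$ has P\'{o}lya--Szeg\H{o} degree at most $(n-1)+1+m=m+n$ (in every case, including when one of the two polynomial blocks of $D$ vanishes identically), and invoke Proposition~\ref{PropPolyaSzego} to conclude that a nonzero $D$ could not vanish to order $m+n+1$ at the origin. This is shorter and self-contained: it needs neither the external reference behind \eqref{eq:syst-b-beta} nor any matrix computation, and it reuses a tool the paper already deploys elsewhere (cf.\ Remark~\ref{REM-1REV}). What it gives up is the explicit structure of the linear system and its LU factorization, which the paper's route exposes and which is of independent use when one prescribes multiplicities below the maximal one. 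Your supporting details check out: your displayed $b_k,\beta_k$ agree with \eqref{BetaBMaxMultiplicity} after rewriting the binomials, the exact multiplicity at $0$ follows from $g(0)=\frac{1}{m!}\int_0^1 t^m(1-t)^n\diff t=\frac{n!}{(m+n+1)!}\neq 0$, and the binomial identities you list suffice to recover \eqref{Coeffs} from Lemma~\ref{LemmCoeffsInverseTransform}.
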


Before turning to the proof of Theorem~\ref{MaxMult}, a few remarks are in order.

\begin{remark}
By considering the first equation in \eqref{Coeffs} with $k = n-1$, one obtains the simple and interesting relation between $s_0$, $\tau$, and $a_{n-1}$ given by
\begin{equation}
\label{RelationS0ANMinus1Tau}
s_0 = -\frac{a_{n-1}}{n} - \frac{m+1}{\tau}.
\end{equation}
\end{remark}

\begin{remark}
\label{REM-1REV}
Under conditions \eqref{Coeffs}, $s_0$ is the unique real root of $\Delta$, and, more precisely, it is the unique root of $\Delta$ on the horizontal strip $\{s \in \mathbb C \suchthat \abs{\Im(s)} < \frac{2\pi}{\tau}\}$ of the complex plane. Indeed, applying Proposition~\ref{PropPolyaSzego} with $\alpha = 0$ and any $\beta \in (0, \frac{2\pi}{\tau})$, we deduce that the number of roots of $\Delta$ in the strip $S_\beta = \{s \in \mathbb C \suchthat 0 \leq \Im(s) \leq \beta\}$, counted according to their multiplicity, is at most $\mathscr D_{PS}$, since $\frac{\tau(\beta - \alpha)}{2 \pi} < 1$ in this case. Since $s_0 \in S_\beta$ and its multiplicity as a root of $\Delta$ is exactly $\mathscr D_{PS}$, this shows that no other root of $\Delta$ can belong to $S_\beta$. The conclusion now follows since $\beta \in (0, \frac{2\pi}{\tau})$ is arbitrary, and by remarking that $\Delta$ has only real coefficients and thus its roots appear in complex-conjugate pairs.
\end{remark}

\begin{remark}
\label{REM-2REV}
Theorem~\ref{MaxMult} deals only with a \emph{real} root $s_0$ of $\Delta$ attaining the maximal multiplicity $\mathscr D_{PS}$. The main reason for this restriction is that, as shown in \cite[Corollaries~1 and 2]{Boussaada2016Tracking}, nonreal roots of $\Delta$ cannot have a multiplicity equal to the P\'{o}lya--Szeg\H{o} bound $\mathscr D_{PS}$ (see also \cite{mazantiComplex} for further discussion on the maximal multiplicity of complex roots in the case $n = 2$ and $m = 1$). Indeed, any root $s_0$ of $\Delta$ attaining the maximal multiplicity $\mathscr D_{PS}$ necessarily satisfies \eqref{RelationS0ANMinus1Tau}, and thus it will be real since $a_{n-1}$ is real.
\end{remark}

The proof of Theorem~\ref{MaxMult} can be simplified by noticing that $s_0$ is a root of the quasipolynomial $\Delta$ of a given multiplicity if and only if $0$ is a root with the same multiplicity of the quasipolynomial $\widetilde\Delta$ defined by $\widetilde\Delta(z) = \tau^n \Delta(s_0 + \frac{z}{\tau})$. This transformation of $\Delta$ into $\widetilde\Delta$ corresponds to the linear change of variable $z = \tau(s - s_0)$, which shifts the desired multiple root $s_0$ to $0$ and reduces the delay $\tau$ to $1$, and it has been described in Lemmas~\ref{LemmDeltaTilde} and \ref{LemmCoeffsInverseTransform} in Section~\ref{SecQuasipoly}. We can then focus on providing necessary and sufficient conditions on the coefficients of $\widetilde\Delta$ in order for $0$ to be a root of maximal multiplicity $\mathscr{D}_{PS}=m+n+1$, which is the main subject of our next result, Lemma~\ref{LemmCoeffsBetaB}, which also states that, under such conditions, $\widetilde\Delta$ can be factorized as the product of $z^{m+n+1}$ and an entire function expressed as the Laplace transform of a Bernstein polynomial, as the one discussed in Example~\ref{LaplaceBernstein}.

\begin{lemma}
\label{LemmCoeffsBetaB}
Let $n \in \mathbb N^\ast$ and $m \in \mathbb N$ satisfy $m\leq n$, $b_0, \dotsc, b_{n-1}, \beta_0, \dotsc, \beta_{m} \in \mathbb R$, and $\widetilde\Delta$ be the quasipolynomial given by \eqref{DeltaTilde}. Then $0$ is a root of multiplicity $\mathscr{D}_{PS} = m + n + 1$ of $\widetilde\Delta$ if and only if
\begin{equation}
\label{BetaBMaxMultiplicity}
\left\{
\begin{aligned}
b_k & = (-1)^{n-k} \frac{n!}{k!} \binom{m+n-k}{m} & \quad & \textnormal{for every } k \in \llbracket 0, n-1\rrbracket, \\
\beta_k & = (-1)^{n-1} \frac{(m+n-k)!}{k! (m-k)!} & & \textnormal{for every } k \in \llbracket 0, m\rrbracket.
\end{aligned}
\right.
\end{equation}

Moreover, if \eqref{BetaBMaxMultiplicity} is satisfied, then, for every $z \in \mathbb C$,
\begin{equation}
\label{FactorizationWidetildeDelta}
\widetilde\Delta(z) = \frac{z^{m+n+1}}{m!} \int_0^1 t^{m} (1-t)^{n} e^{-zt} \diff t.
\end{equation}
\end{lemma}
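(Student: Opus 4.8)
The plan is to exploit the factorization \eqref{FactorizationWidetildeDelta} as the central object, working essentially in the reverse of the order in which the statement is phrased. First I would compute, using Proposition~\ref{PropIntegralPExp} with the polynomial $p(t) = \frac{1}{m!} t^m (1-t)^n$ (of degree $m+n$), the closed form of $\int_0^1 t^m (1-t)^n e^{-zt} \diff t$ for $z \neq 0$. Since $p$ has a zero of order $m$ at $t = 0$ and a zero of order $n$ at $t = 1$, the derivatives $p^{(k)}(0)$ vanish for $k < m$ and $p^{(k)}(1)$ vanish for $k < n$, so the sum $\sum_{k=0}^{m+n} \frac{p^{(k)}(0) - p^{(k)}(1) e^{-z}}{z^{k+1}}$ collapses: only $k \in \llbracket m, m+n\rrbracket$ contribute to the polynomial-in-$z$ part and only $k = m+n$ (i.e. the top derivative, which is a constant) contributes to the $e^{-z}$ part, because $p^{(k)}(1) = 0$ for $k \in \llbracket m, m+n-1\rrbracket \cap \llbracket 0, n-1\rrbracket$ — one must check the index ranges carefully here using $m \leq n$. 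Multiplying through by $z^{m+n+1}$ turns this into the quasipolynomial $z^n + \sum_{k=0}^{n-1} \widehat b_k z^k + e^{-z} \sum_{k=0}^{m} \widehat\beta_k z^k$ for explicit constants $\widehat b_k, \widehat\beta_k$ obtained from $p^{(k)}(0)$ and $p^{(m+n)}$. Evaluating $p^{(k)}(0)$ from the binomial expansion $t^m (1-t)^n = \sum_j \binom{n}{j}(-1)^j t^{m+j}$ gives exactly the right-hand side of the first line of \eqref{BetaBMaxMultiplicity}, and the single term $p^{(m+n)}(1) = p^{(m+n)}(0) = \frac{(m+n)!}{m!}(-1)^n$ reproduces the second line; so $\frac{z^{m+n+1}}{m!}\int_0^1 t^m(1-t)^n e^{-zt}\diff t = \widetilde\Delta(z)$ whenever the coefficients of $\widetilde\Delta$ are given by \eqref{BetaBMaxMultiplicity}. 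Since the right-hand side of \eqref{FactorizationWidetildeDelta} is manifestly entire with a zero of order at least $m+n+1$ at the origin (the integral is entire and nonzero at $0$, equal to the Beta value $\frac{m!n!}{(m+n+1)!} > 0$), this simultaneously proves the ``if'' direction of the equivalence and the factorization formula \eqref{FactorizationWidetildeDelta}.

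For the converse, I would argue that $0$ being a root of $\widetilde\Delta$ of multiplicity $m+n+1$ forces \eqref{BetaBMaxMultiplicity}. Writing $\widetilde\Delta(z) = z^n + \sum_{k=0}^{n-1} b_k z^k + e^{-z}\sum_{k=0}^{m}\beta_k z^k$ and expanding $e^{-z} = \sum_{\ell \geq 0} \frac{(-z)^\ell}{\ell!}$, the condition $\widetilde\Delta^{(j)}(0) = 0$ for $j \in \llbracket 0, m+n\rrbracket$ yields $m+n+1$ linear equations in the $m+n+1$ unknowns $b_0, \dotsc, b_{n-1}, \beta_0, \dotsc, \beta_m$. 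Collecting the coefficient of $z^j$ in $\widetilde\Delta$ gives, for $j \in \llbracket 0, n-1\rrbracket$, the relation $b_j + \sum_{k=0}^{\min(j,m)} \beta_k \frac{(-1)^{j-k}}{(j-k)!} = 0$, for $j = n$ the relation $1 + \sum_{k=0}^{m}\beta_k \frac{(-1)^{n-k}}{(n-k)!} = 0$, and for $j \in \llbracket n+1, m+n\rrbracket$ the relation $\sum_{k=0}^{m}\beta_k \frac{(-1)^{j-k}}{(j-k)!} = 0$ — this last block of $m$ equations, together with the $j=n$ equation, determines $\beta_0, \dotsc, \beta_m$ uniquely (the coefficient matrix is triangular-like after reindexing, or one inverts a Vandermonde-type system), and then the first block determines $b_0, \dotsc, b_{n-1}$. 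Rather than solving this system from scratch, the cleaner route is: we have just exhibited \emph{one} choice of coefficients, namely \eqref{BetaBMaxMultiplicity}, for which $\widetilde\Delta$ has a root of multiplicity exactly $m+n+1$ at $0$ (exactly, by the factorization, since the integral is nonzero at $0$); since the map from coefficients to the vector $(\widetilde\Delta^{(0)}(0), \dotsc, \widetilde\Delta^{(m+n)}(0))$ is affine with invertible linear part (this invertibility is what needs to be checked — it is the statement that the relevant $(m+n+1)\times(m+n+1)$ matrix is nonsingular), the solution of ``$0$ is a root of multiplicity $\geq m+n+1$'' is unique, hence equals \eqref{BetaBMaxMultiplicity}.

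I expect the main obstacle to be the bookkeeping in the first step: correctly tracking which derivatives $p^{(k)}(0)$ and $p^{(k)}(1)$ vanish and which survive, and verifying that the surviving terms assemble into precisely the stated coefficients — in particular that $p^{(k)}(1)$ vanishes for all $k$ in the range $\llbracket 0, n-1\rrbracket$ relevant to the $z^k e^{-z}$ terms with $k \leq m \leq n$, so that the $e^{-z}$-part of $z^{m+n+1}\cdot(\text{Prop.~\ref{PropIntegralPExp} sum})$ really is $(-1)^n \frac{(m+n)!}{m!}$ times a \emph{polynomial of degree exactly $m$} and not higher. A secondary technical point, needed for the ``only if'' direction, is the nonsingularity of the coefficient-to-Taylor-jet matrix; this can be dispatched either by the explicit triangular structure after reordering the equations (the $\beta_k$'s are pinned down by the equations coming from $j \geq n$, which form an $(m+1)\times(m+1)$ system with a Vandermonde/confluent structure) or simply by invoking that a quasipolynomial of degree $\mathscr D_{PS} = m+n+1$ has at most $\mathscr D_{PS}$ roots counted with multiplicity in any horizontal line (Proposition~\ref{PropPolyaSzego}), so multiplicity $m+n+1$ at $0$ is the maximal possible and the constraint is genuinely $m+n+1$-dimensional and consistent with the unique solution already found. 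Everything else is routine differentiation and binomial identities, for which the identity $\sum_{j=k}^{n-1}(-1)^{j-k}\binom{j}{k}\binom{n}{j} = -(-1)^{n-k}\binom{n}{k}$ already cited in the excerpt may be useful as a sanity check.
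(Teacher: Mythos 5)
Your overall strategy coincides with the paper's: apply Proposition~\ref{PropIntegralPExp} to the Bernstein polynomial $t^m(1-t)^n$ to show that the right-hand side of \eqref{FactorizationWidetildeDelta} is a quasipolynomial of the form \eqref{DeltaTilde} with coefficients \eqref{BetaBMaxMultiplicity} and a zero of order $m+n+1$ at the origin, and then argue that the linear system $\widetilde\Delta^{(k)}(0)=0$ for $k\in\llbracket 0,m+n\rrbracket$ has a unique solution. Two points need repair. First, your bookkeeping of the $e^{-z}$ part is wrong as stated: since $p^{(k)}(1)=0$ exactly for $k\in\llbracket 0,n-1\rrbracket$ while $p^{(k)}(1)=(-1)^n k!\binom{m}{k-n}$ for $k\in\llbracket n,m+n\rrbracket$, \emph{all} indices $k\in\llbracket n,m+n\rrbracket$ contribute to the $e^{-z}$ part, producing $e^{-z}$ times a polynomial of degree $m$ whose $m+1$ coefficients are $\beta_j=(-1)^{n-1}\frac{(m+n-j)!}{j!(m-j)!}$ --- not the single constant $p^{(m+n)}(1)$. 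Your closing paragraph states the correct picture (degree exactly $m$), which contradicts your first paragraph; the claims ``only $k=m+n$ contributes'' and ``the single term \dots\ reproduces the second line'' must be discarded and replaced by the full computation.

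Second, the uniqueness step. The $(m+1)\times(m+1)$ system for $\beta$ has coefficient matrix $T_{j,k}=\frac{(-1)^{n+j-k}}{(n+j-k)!}$, which is a full Toeplitz-type matrix, not ``triangular-like after reindexing''; its invertibility is the real content of the paper's proof, obtained there by rescaling to $B_{j,k}=(-1)^{n+j-k}\binom{n+j}{k}$ and exhibiting an explicit LU factorization via the Vandermonde convolution $\sum_{\ell}\binom{j}{\ell}\binom{n}{k-\ell}=\binom{n+j}{k}$, which gives $\det B=(-1)^{n(m+1)}\neq 0$. Your alternative via Proposition~\ref{PropPolyaSzego} can be made to work, but only as a difference argument, which you do not quite formulate: if two coefficient vectors both made $0$ a root of multiplicity $m+n+1$, the difference of the corresponding quasipolynomials would have degree at most $1+(n-1)+m=m+n$ (the leading $z^n$ cancels) while vanishing to order $m+n+1$ at the origin, contradicting the multiplicity bound unless the difference is identically zero. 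Spelled out this way, the P\'olya--Szeg\H{o} route is a genuine simplification that bypasses the matrix computation entirely; as written, however, the assertion that ``the constraint is genuinely $(m+n+1)$-dimensional and consistent with the unique solution already found'' does not establish uniqueness.
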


\begin{proof}
We first prove that the right-hand side of \eqref{FactorizationWidetildeDelta} is indeed a quasipolynomial of the form \eqref{DeltaTilde} with coefficients $b_0, \dotsc, b_{n-1}, \beta_0, \dotsc, \beta_m$ given by \eqref{BetaBMaxMultiplicity} and admitting $0$ as a root of multiplicity $\mathscr{D}_{PS}$. For that purpose, let us introduce the function $Q: \mathbb C \to \mathbb C$ defined as the right-hand side of \eqref{FactorizationWidetildeDelta}, i.e., for every $z \in \mathbb C$,
\[
Q(z) = \frac{z^{m+n+1}}{m!} \int_0^1 t^{m} (1-t)^{n} e^{-zt} \diff t.
\]
Clearly, $Q$ is an entire function and $0$ is a root of multiplicity $m + n + 1$ of $Q$. Let $p: \mathbb R \to \mathbb R$ be the Bernstein polynomial defined for $t \in \mathbb R$ by $p(t) = t^m (1-t)^n$, whose degree is $m + n$. Then, using Proposition~\ref{PropIntegralPExp}, we have
\begin{equation}
\label{ExpansionQ}
Q(z) = \frac{z^{m+n+1}}{m!} \sum_{k = 0}^{m + n} \frac{p^{(k)}(0) - p^{(k)}(1) e^{-z}}{z^{k + 1}}.
\end{equation}
We have
\[
p(t) = t^m \sum_{k = 0}^n \binom{n}{k} (-1)^k t^k = \sum_{k=m}^{m + n} (-1)^{k - m} \binom{n}{k - m} t^k,
\]
and thus $p^{(k)}(0) = 0$ for $k \in \llbracket 0, \dotsc, m-1\rrbracket$ and $p^{(k)}(0) = (-1)^{k - m} k! \binom{n}{k - m}$ for $k \in \llbracket m, m + n\rrbracket$. Similarly, we have
\[
p(t) = (1 - t)^n (t - 1 + 1)^m = (-1)^n (t-1)^n \sum_{k=0}^m \binom{m}{k} (t - 1)^k = (-1)^n \sum_{k=n}^{m + n} \binom{m}{k - n} (t - 1)^{k},
\]
and thus $p^{(k)}(1) = 0$ for $k \in \llbracket 0, n - 1\rrbracket$ and $p^{(k)}(1) = (-1)^n k! \binom{m}{k - n}$ for $k \in \llbracket n, m + n\rrbracket$. Thus, \eqref{ExpansionQ} becomes
\begin{align*}
Q(z) & = \sum_{k = m}^{m + n} (-1)^{k - m} \frac{k!}{m!} \binom{n}{k - m} z^{m + n - k} + e^{-z} \sum_{k = n}^{m + n} (-1)^{n - 1} \frac{k!}{m!} \binom{m}{k - n} z^{m + n - k} \\
& = \sum_{k = 0}^{n} (-1)^{n - k} \frac{(m + n - k)!}{m!} \binom{n}{k} z^{k} + e^{-z} \sum_{k = 0}^{m} (-1)^{n - 1} \frac{(m + n - k)!}{m!} \binom{m}{k} z^{k}.
\end{align*}
Hence, $Q$ is indeed a quasipolynomial under the form \eqref{DeltaTilde} with coefficients $b_0, \dotsc, b_{n-1},\allowbreak \beta_0, \dotsc, \beta_m$ given by \eqref{BetaBMaxMultiplicity} and admitting $0$ as a root of multiplicity $\mathscr{D}_{PS}$, as required.

In order to conclude the proof, it suffices to show that \eqref{BetaBMaxMultiplicity} is the unique choice of coefficients for $\widetilde\Delta$ ensuring that $0$ is a root of multiplicity $m + n + 1$. To see that, notice that, since the degree of the quasipolynomial $\widetilde\Delta$ is $m + n + 1$, $0$ is a root of multiplicity $m + n + 1$ of $\widetilde\Delta$ if and only if $\widetilde\Delta^{(k)}(0) = 0$ for every $k \in \llbracket 0, m+n\rrbracket$. By \cite[Proposition~5.1]{Boussaada2016Characterizing}, this is the case if and only if the coefficients $b_0, \dotsc, b_{n-1}, \beta_0, \dotsc, \beta_{m}$ satisfy
\begin{equation}
\label{eq:syst-b-beta}
\left\{
\begin{aligned}
b_k & = - \beta_k - \sum_{\ell = 0}^{k-1} \frac{(-1)^{k-\ell} \beta_\ell}{(k - \ell)!}, & \qquad & \forall k \in \llbracket 0, \min(m, n-1)\rrbracket, \\
b_k & = - \sum_{\ell = 0}^{m} \frac{(-1)^{k-\ell} \beta_\ell}{(k - \ell)!}, & \qquad & \forall k \in \llbracket m+1, n-1\rrbracket, \\
1 & = - \sum_{\ell = 0}^{m} \frac{(-1)^{n - \ell} \beta_\ell}{(n - \ell)!}, & & \\
0 & = - \sum_{\ell = 0}^{m} \frac{(-1)^{k - \ell} \beta_\ell}{(k - \ell)!}, & \qquad & \forall k \in \llbracket n + 1, m+n\rrbracket.
\end{aligned}
\right.
\end{equation}
The first two equalities in \eqref{eq:syst-b-beta} allow us to compute $b_0, \dotsc, b_{n-1}$ once $\beta_0, \dotsc, \beta_m$ are known, and the last two equalities in \eqref{eq:syst-b-beta} provide a linear system on $\beta_0, \dotsc, \beta_m$. This linear system can be written under the form $T \beta = -e_0$, where $T = (T_{j, k})_{j, k \in \llbracket 0, m\rrbracket}$ is defined by
\[
T_{j, k} = \frac{(-1)^{n + j - k}}{(n + j - k)!}, \qquad j, k \in \llbracket 0, m\rrbracket,
\]
$\beta = (\beta_0, \dotsc, \beta_m)^{\transp}$, and $e_0 = (1, 0, \dotsc, 0)^{\transp} \in \mathbb R^{m+1}$. By the previous arguments, the coefficients $b_0, \dots, b_{n-1}, \beta_0, \dotsc, \beta_m$ given by \eqref{BetaBMaxMultiplicity} necessarily satisfy \eqref{eq:syst-b-beta}, since they ensure that $0$ is a root of multiplicity $m + n + 1$ of $\widetilde\Delta$, and thus we are left to prove that \eqref{eq:syst-b-beta} admits a unique solution, which is equivalent to showing that the matrix $T$ is invertible.

Let us introduce the diagonal matrices $D_0 = \diag(0!, 1!, \dotsc, m!)$ and $D_n = \diag(n!, (n+1)!, \dotsc, ( m+n)!)$ and set $B = D_n T D_0^{-1}$. Writing $B = (B_{j, k})_{j, k \in \llbracket 0, m\rrbracket}$, we have
\[
B_{j, k} = (-1)^{n + j - k} \binom{n + j}{k}, \qquad j, k \in \llbracket 0, m\rrbracket.
\]
We have $B = L U$, where $L = (L_{j, k})_{j, k \in \llbracket 0, m\rrbracket}$ and $U = (U_{j, k})_{j, k \in \llbracket 0, m\rrbracket}$ are defined, for $j, k \in \llbracket 0, m\rrbracket$, by $L_{j, k} = (-1)^{j - k} \binom{j}{k}$ and $U_{j, k} = (-1)^{n+j-k}\binom{n}{k-j}$. Indeed, for $j, k \in \llbracket 0, m\rrbracket$, we have
\begin{equation*}
\sum_{\ell = 0}^{m} L_{j, \ell} U_{\ell, k} = (-1)^{n+j-k} \sum_{\ell = 0}^{m} \binom{j}{\ell} \binom{n}{k-\ell} = (-1)^{n+j-k} \binom{n+j}{k} = B_{j, k},
\end{equation*}
where we use the identity $\sum_{\ell = 0}^m \binom{j}{\ell} \binom{n}{k - \ell} = \binom{n + j}{k}$ (see, e.g., \cite[Proposition~2.16]{MBN-2021-JDE}). Notice that $L$ is lower triangular and $U$ is upper triangular, and hence the factorization $B = L U$ corresponds to the LU factorization of $B$. As a consequence of this factorization, we also deduce that $\det B = (-1)^{n\,(m+1)}$ and, in particular, $B$ is invertible. Hence, $T = D_n^{-1} B D_0$ is also invertible, concluding the proof.
\end{proof}

\begin{proof}[Proof of Theorem \ref{MaxMult}]
Define $\widetilde\Delta$ from $\Delta$ as in \eqref{DefiDeltaTilde}. One immediately verifies that $s_0$ is a root of multiplicity $m+n+1$ of $\Delta$ if and only if $0$ is a root of multiplicity $m+n+1$ of $\widetilde\Delta$. The result then follows by a straightforward computation using Lemmas \ref{LemmCoeffsInverseTransform} and \ref{LemmCoeffsBetaB}.
\end{proof}

\subsection{Dominance of roots of maximal multiplicity}

Theorem~\ref{MaxMult} provides necessary and sufficient conditions for a real number $s_0$ to be a root of maximal multiplicity of the quasipolynomial $\Delta$ from \eqref{Delta}. The main result of this section states that, under those conditions, $s_0$ is necessarily a dominant root of $\Delta$.

\begin{theorem}
\label{Dominance}
Consider the quasipolynomial $\Delta$ given by \eqref{Delta} and let $s_0 \in \mathbb R$.
\begin{enumerate}
\item\label{ItemB}(Retarded) If $m<n$ and  \eqref{Coeffs} is satisfied, then $s_0$ is a strictly dominant root of $\Delta$.
\item\label{ItemD}(Neutral) If $m=n$ and  \eqref{Coeffs} is satisfied then, $s_0$ is a dominant root of $\Delta$ and, for every other complex root $s$ of $\Delta$, one has $\Re(s) = s_0$. More precisely, the set of roots of $\Delta$ is $\{s_0 + i \frac{\zeta}{\tau} \suchthat \zeta \in \Xi_n\}$, where
\begin{equation}\label{NeutralImaginaryCluster}
\Xi_n = \left\{\zeta \in \mathbb R \suchthat \tan\left(\frac{\zeta}{2}\right) = \frac{\displaystyle \zeta \sum_{\ell = 0}^{\floor*{\frac{n-1}{2}}} (-1)^\ell \frac{(2n - 2\ell - 1)!}{(2\ell + 1)! (n - 2\ell - 1)!} \zeta^{2\ell}}{\displaystyle \sum_{\ell = 0}^{\floor*{\frac{n}{2}}} (-1)^\ell \frac{(2n - 2\ell)!}{(2\ell)! (n - 2\ell)!} \zeta^{2\ell}}\right\}.
\end{equation}
\end{enumerate}
\end{theorem}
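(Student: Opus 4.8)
The plan is to reduce both cases, via the change of variables of Lemma~\ref{LemmDeltaTilde}, to the location of the nonzero zeros of the entire function
\[
F(z)=\int_0^1 t^m(1-t)^n e^{-zt}\diff t,
\]
and then to identify $F$ with a Kummer confluent hypergeometric function so that Proposition~\ref{CorZerosKummer} applies directly. First I would set $\widetilde\Delta(z)=\tau^n\Delta\bigl(s_0+\tfrac{z}{\tau}\bigr)$ as in \eqref{DefiDeltaTilde}. By Lemmas~\ref{LemmDeltaTilde}, \ref{LemmCoeffsInverseTransform} and \ref{LemmCoeffsBetaB}, condition \eqref{Coeffs} is equivalent to \eqref{BetaBMaxMultiplicity} for the coefficients of $\widetilde\Delta$, and under that condition Lemma~\ref{LemmCoeffsBetaB} gives the factorization $\widetilde\Delta(z)=\frac{z^{m+n+1}}{m!}F(z)$. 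Since $z=\tau(s-s_0)$ and $\tau>0$, a complex number $s\neq s_0$ is a root of $\Delta$ exactly when the associated $z\neq 0$ is a zero of $F$, and the sign of $\Re(s)-s_0$ equals that of $\Re(z)$. Hence \ref{ItemB} reduces to showing that all zeros of $F$ lie in $\mathbb C_-$, and the dominance part of \ref{ItemD} to showing that all zeros of $F$ are purely imaginary.

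\textbf{Reduction to Kummer functions.} Using the integral representation \eqref{EqKummerIntegral} with $a=m+1$ and $b=m+n+2$ — so that $a-1=m$, $b-a-1=n$, $\Re(b)>\Re(a)>0$, and $\Gamma(b)/(\Gamma(a)\Gamma(b-a))=\frac{(m+n+1)!}{m!\,n!}$ — I obtain
\[
F(z)=\frac{m!\,n!}{(m+n+1)!}\,\Phi\bigl(m+1,\,m+n+2,\,-z\bigr),
\]
so that the zeros of $F$ are precisely the negatives of the (necessarily nontrivial) zeros of $\Phi(m+1,m+n+2,\cdot)$. Here $a=m+1\geq 1>0$ and $b=m+n+2\geq n+2\geq 3\geq 2$, so Proposition~\ref{CorZerosKummer} applies, with $b-2a=n-m$. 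If $m<n$, then $b>2a$, and Proposition~\ref{CorZerosKummer}\ref{CorZerosKummer-k-geq-0} gives $\Re(-z)>0$, i.e. $\Re(z)<0$, for every zero $z$ of $F$; translating back, every root $s\neq s_0$ of $\Delta$ satisfies $\Re(s)<s_0$, which is \ref{ItemB}. If $m=n$, then $b=2a$, and Proposition~\ref{CorZerosKummer}\ref{CorZerosKummer-k-eq-0} gives that every zero of $\Phi$, hence every zero $z$ of $F$, is purely imaginary, so every root $s\neq s_0$ of $\Delta$ satisfies $\Re(s)=s_0$, which is the first assertion of \ref{ItemD}.

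\textbf{The neutral imaginary cluster.} To identify the imaginary zeros when $m=n$, I would use the polynomial form of $\widetilde\Delta$ obtained inside the proof of Lemma~\ref{LemmCoeffsBetaB}: with $c_k=\frac{(2n-k)!}{k!\,(n-k)!}$ and $P(z)=\sum_{k=0}^n c_k z^k$, the expansion of the right-hand side of \eqref{FactorizationWidetildeDelta} carried out there reads $\widetilde\Delta(z)=(-1)^n\bigl(P(-z)-e^{-z}P(z)\bigr)$. Writing $P(i\zeta)=\mathcal A(\zeta)+i\,\mathcal B(\zeta)$, where $\mathcal A$ and $\mathcal B$ collect the even- and odd-degree terms, the half-angle identities $1-\cos\zeta=2\sin^2(\zeta/2)$, $1+\cos\zeta=2\cos^2(\zeta/2)$, $\sin\zeta=2\sin(\zeta/2)\cos(\zeta/2)$ give
\[
\widetilde\Delta(i\zeta)=2(-1)^n\,i\,e^{-i\zeta/2}\bigl(\sin(\zeta/2)\,\mathcal A(\zeta)-\cos(\zeta/2)\,\mathcal B(\zeta)\bigr),
\]
so $\widetilde\Delta(i\zeta)=0$ if and only if $\sin(\zeta/2)\,\mathcal A(\zeta)=\cos(\zeta/2)\,\mathcal B(\zeta)$. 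Wherever $\mathcal A(\zeta)\neq0$ this is exactly $\tan(\zeta/2)=\mathcal B(\zeta)/\mathcal A(\zeta)$, and inserting $c_{2\ell}=\frac{(2n-2\ell)!}{(2\ell)!(n-2\ell)!}$ and $c_{2\ell+1}=\frac{(2n-2\ell-1)!}{(2\ell+1)!(n-2\ell-1)!}$ turns the latter into the equation defining $\Xi_n$ in \eqref{NeutralImaginaryCluster}, the overall factor $\zeta$ in $\mathcal B$ producing the $\zeta$ in the numerator there. No imaginary zero is lost at the (finitely many) real zeros of $\mathcal A$: if $\zeta_*$ is a zero of $\mathcal A$ which is also a zero of $\widetilde\Delta(i\cdot)$, then $\cos(\zeta_*/2)\,\mathcal B(\zeta_*)=0$, and either $\mathcal B(\zeta_*)=0$, whence $P(i\zeta_*)=0$, impossible since $P$ is (up to rescaling) a reverse Bessel polynomial and hence Hurwitz-stable, or $\cos(\zeta_*/2)=0$, whence $\zeta_*$ is an odd multiple of $\pi$, impossible since $\zeta_*$, as a root of a polynomial with rational coefficients, is algebraic. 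Finally $z=i\zeta$ corresponds to $s=s_0+i\zeta/\tau$, and $\zeta=0$ (the trivial root, corresponding to $s_0$ itself) satisfies the equation, so the root set of $\Delta$ is exactly $\{s_0 + i \frac{\zeta}{\tau} \suchthat \zeta \in \Xi_n\}$.

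\textbf{Expected obstacle.} The conceptual content is entirely carried by Proposition~\ref{CorZerosKummer}; once it is granted, \ref{ItemB} and the dominance half of \ref{ItemD} follow immediately. What remains is bookkeeping: the identification of $F$ with the correct Kummer function, which is routine from \eqref{EqKummerIntegral}, and the trigonometric reduction producing \eqref{NeutralImaginaryCluster}, whose only delicate point is the behaviour at the zeros of $\mathcal A$, handled as above. One could, in the retarded case, additionally invoke Proposition~\ref{CorZerosKummer}\ref{CorZerosKummer-k-neq-0} to confine all the remaining roots to a precise cone inside $\mathbb C_-$, but that refinement is not needed for dominance.
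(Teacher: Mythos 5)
Your proposal is correct and follows essentially the same route as the paper: the same reduction via $z=\tau(s-s_0)$ to the factorization $\widetilde\Delta(z)=\frac{n!}{(m+n+1)!}z^{m+n+1}\Phi(m+1,m+n+2,-z)$, the same appeal to Proposition~\ref{CorZerosKummer} with $b-2a=n-m$ for both the retarded and neutral cases, and the same trigonometric reduction of $\widetilde\Delta(i\zeta)=0$ to the equation defining $\Xi_n$. Your extra paragraph ruling out spurious behaviour at the real zeros of $\mathcal A$ (via Hurwitz stability of the reverse Bessel polynomial and transcendence of odd multiples of $\pi$) is a small refinement of a point the paper's Lemma~\ref{LemmImaginaryRoots} passes over when asserting the equivalence with $\tan(\zeta/2)=\mathcal B(\zeta)/\mathcal A(\zeta)$, but it does not change the substance of the argument.
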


\begin{remark}
In the particular case of $n=m=1$, the spectrum distribution described by \eqref{NeutralImaginaryCluster}, i.e., $\Xi_1 = \left\{\zeta \in \mathbb R \suchthat \tan\left(\frac{\zeta}{2}\right) = \frac{\zeta}{2}\right\}$, has been already identified in \cite{MBNC-2021-MTNS}.
\end{remark}

To prove Theorem~\ref{Dominance}, we rely, as in the proof of Theorem~\ref{MaxMult}, on the linear change of variable $z = \tau(s - s_0)$. We thus first study the quasipolynomial $\widetilde\Delta$ from \eqref{DeltaTilde} under conditions \eqref{BetaBMaxMultiplicity}, in which case we have, by Theorem~\ref{MaxMult}, the factorization \eqref{FactorizationWidetildeDelta}. The factorization \eqref{FactorizationWidetildeDelta} can also be written, thanks to \eqref{EqKummerIntegral}, as
\begin{equation}
\label{FactorizationKummer}
\widetilde\Delta(z) = \frac{n!}{(m + n + 1)!} z^{m+n+1} \Phi(m+1, m+n+2, -z),
\end{equation}
where $\Phi$ is the Kummer confluent hypergeometric function defined in \eqref{DefiConfluent}. The next lemma uses properties of the roots of $\Phi$ in order to deduce that $0$ is a dominant root of $\widetilde\Delta$.

\begin{lemma}
\label{LemmDominancy}
Let $n \in \mathbb N^\ast$, $b_0, \dotsc, b_{n-1}, \beta_0, \dotsc, \beta_{m} \in \mathbb R$ be given by \eqref{BetaBMaxMultiplicity}, and $\widetilde\Delta$ be the quasipolynomial given by \eqref{DeltaTilde}. Let $z$ be a root of $\widetilde\Delta$ with $z \neq 0$. Then $\Re(z) < 0$ if $m<n$ and $\Re(z) = 0$ if $m=n$.
\end{lemma}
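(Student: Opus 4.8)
The plan is to reduce the statement to the zero-localization result for Kummer functions, Proposition~\ref{CorZerosKummer}, by means of the factorization \eqref{FactorizationKummer}.

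First I would note that, since the coefficients $b_0, \dotsc, b_{n-1}, \beta_0, \dotsc, \beta_m$ satisfy \eqref{BetaBMaxMultiplicity}, Lemma~\ref{LemmCoeffsBetaB} gives the integral factorization \eqref{FactorizationWidetildeDelta} for $\widetilde\Delta$; rewriting the integral in \eqref{FactorizationWidetildeDelta} through the representation \eqref{EqKummerIntegral} of $\Phi$ with parameters $a = m+1$ and $b = m+n+2$ --- which is legitimate since $\Re(b) = m+n+2 > m+1 = \Re(a) > 0$ --- yields the factorization \eqref{FactorizationKummer}. As the scalar prefactor appearing in \eqref{FactorizationKummer} is nonzero and $z^{m+n+1} \neq 0$ whenever $z \neq 0$, a nonzero $z \in \mathbb C$ is a root of $\widetilde\Delta$ if and only if $-z$, which is likewise nonzero, is a nontrivial root of $\Phi(m+1, m+n+2, \cdot)$.

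Next I would apply Proposition~\ref{CorZerosKummer} with $a = m+1$ and $b = m+n+2$: these are real, and $b = m+n+2 \geq 2$ because $m \geq 0$ and $n \geq 1$, so the proposition applies. Moreover $b - 2a = (m+n+2) - (2m+2) = n - m$, which is nonnegative by the standing hypothesis $m \leq n$; in particular the case $b < 2a$ of Proposition~\ref{CorZerosKummer} (which would force some roots of $\widetilde\Delta$ into the right half-plane) does not arise. If $m < n$ then $b > 2a$, and Proposition~\ref{CorZerosKummer}\ref{CorZerosKummer-k-geq-0} gives $\Re(-z) > 0$, i.e., $\Re(z) < 0$; if $m = n$ then $b = 2a$, and Proposition~\ref{CorZerosKummer}\ref{CorZerosKummer-k-eq-0} gives that $-z$ is purely imaginary, i.e., $\Re(z) = 0$. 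This is exactly the claimed dichotomy.

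The argument is short because all the substantive work has been done elsewhere: the factorization in Lemma~\ref{LemmCoeffsBetaB}, the classical identity \eqref{EqKummerIntegral}, and, crucially, Proposition~\ref{CorZerosKummer} (proved in \cite{BMN-2021-NM}). The only points requiring attention --- and they are routine --- are checking that the constraint $m \leq n$ confines us to the two cases $b > 2a$ and $b = 2a$ of Proposition~\ref{CorZerosKummer}, so that the unfavourable situation $\Re(z) > 0$ is excluded, and checking that the correspondence $z \leftrightarrow -z$ between nonzero roots of $\widetilde\Delta$ and nontrivial roots of $\Phi(m+1, m+n+2, \cdot)$ is a bijection, so that no root is lost or gained. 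I do not foresee any genuine obstacle in this lemma; the finer description of the admissible imaginary parts in the neutral case $m=n$ is a separate matter, to be extracted directly from the quasipolynomial form of $\widetilde\Delta$ rather than from this statement.
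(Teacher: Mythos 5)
Your proposal is correct and follows essentially the same route as the paper: it invokes the factorization \eqref{FactorizationKummer} obtained from Lemma~\ref{LemmCoeffsBetaB} together with \eqref{EqKummerIntegral}, and then applies Proposition~\ref{CorZerosKummer} with $a = m+1$, $b = m+n+2$ to the nontrivial roots $-z$ of $\Phi(m+1, m+n+2, \cdot)$. The only difference is that you spell out the verification of the hypotheses ($b \geq 2$ and $b - 2a = n - m \geq 0$), which the paper leaves implicit.
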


\begin{proof}
By Lemma~\ref{LemmCoeffsBetaB}, $\widetilde\Delta$ admits the factorization \eqref{FactorizationKummer}. Hence, if $z$ is a root of $\widetilde\Delta$ with $z \neq 0$, then $-z$ must be a root of $\Phi(m+1, m+n+2, \cdot)$. It follows from Proposition~\ref{CorZerosKummer}\ref{CorZerosKummer-k-geq-0} that, for $n>m$, one has $\Re(-z) > 0$, and thus $\Re(z) < 0$. Also, by applying Proposition~\ref{CorZerosKummer}\ref{CorZerosKummer-k-eq-0}, we deduce that, for $n=m$, one has $\Re(-z) = 0$, and thus $\Re(z) = 0$.
\end{proof}

In order to obtain the additional characterization of the spectrum in the neutral case stated in Theorem~\ref{Dominance}\ref{ItemD}, we also provide the following result on the location of the roots of $\widetilde\Delta$ in the case $m = n$.

\begin{lemma}
\label{LemmImaginaryRoots}
Let $n \in \mathbb N^\ast$, $m = n$, $b_0, \dotsc, b_{n-1}, \beta_0, \dotsc, \beta_{n} \in \mathbb R$ be given by \eqref{BetaBMaxMultiplicity}, $\widetilde\Delta$ be the quasipolynomial given by \eqref{DeltaTilde}, $\Xi_n$ be as in the statement of Theorem~\ref{Dominance}\ref{ItemD}, and $\zeta \in \mathbb R$. Then $i\zeta$ is a root of $\widetilde\Delta$ if and only if $\zeta \in \Xi_n$.
\end{lemma}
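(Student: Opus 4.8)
The plan is to reduce the condition ``$i\zeta$ is a root of $\widetilde\Delta$'' to a single real transcendental equation in $\zeta$ and then identify that equation with the one defining $\Xi_n$. Throughout I work under the hypotheses of the lemma, so that $m=n$, the coefficients of $\widetilde\Delta$ are those of \eqref{BetaBMaxMultiplicity}, and Lemma~\ref{LemmCoeffsBetaB} provides the factorization \eqref{FactorizationWidetildeDelta}. First I would rewrite $\widetilde\Delta$ in a form adapted to the imaginary axis: expanding the right-hand side of \eqref{FactorizationWidetildeDelta} exactly as in the proof of Lemma~\ref{LemmCoeffsBetaB} and collecting the non-delayed and delayed parts, one obtains, for every $z\in\mathbb C$,
\[
\widetilde\Delta(z) = \frac{(-1)^n}{n!}\left(P(-z) - e^{-z}P(z)\right), \qquad\text{where } P(x) = \sum_{k=0}^{n}\binom{n}{k}(2n-k)!\,x^k
\]
is a polynomial of degree $n$. (Equivalently, one may compute the integral in \eqref{FactorizationWidetildeDelta} via Proposition~\ref{PropIntegralPExp} together with the invariance of $t^n(1-t)^n$ under $t\mapsto 1-t$.) Consequently, $i\zeta$ is a root of $\widetilde\Delta$ if and only if $P(-i\zeta)=e^{-i\zeta}P(i\zeta)$, and since $P$ has real coefficients one has $P(-i\zeta)=\overline{P(i\zeta)}$, so the condition reads $\overline{P(i\zeta)}=e^{-i\zeta}P(i\zeta)$.

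Next I would carry out the real/imaginary separation. Writing $P(i\zeta)=A(\zeta)+i\,B(\zeta)$, where $A$ collects the even powers and $B$ the odd powers of $i\zeta$ in $P$, and using $\binom{n}{j}(2n-j)! = n!\,\frac{(2n-j)!}{j!(n-j)!}$, one checks that
\[
A(\zeta) = n!\sum_{\ell=0}^{\floor*{\frac{n}{2}}}(-1)^\ell\frac{(2n-2\ell)!}{(2\ell)!(n-2\ell)!}\zeta^{2\ell}, \qquad B(\zeta) = n!\,\zeta\sum_{\ell=0}^{\floor*{\frac{n-1}{2}}}(-1)^\ell\frac{(2n-2\ell-1)!}{(2\ell+1)!(n-2\ell-1)!}\zeta^{2\ell},
\]
so that $A/n!$ and $B/(n!\zeta)$ are precisely the denominator and the inner numerator sum of the ratio in \eqref{NeutralImaginaryCluster}. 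Expanding $\overline{P(i\zeta)}=e^{-i\zeta}P(i\zeta)$ as $A-iB=(\cos\zeta-i\sin\zeta)(A+iB)$ and equating real and imaginary parts gives the two equations $A(1-\cos\zeta)=B\sin\zeta$ and $A\sin\zeta=B(1+\cos\zeta)$. Feeding in the half-angle identities $1-\cos\zeta=2\sin^2(\zeta/2)$, $1+\cos\zeta=2\cos^2(\zeta/2)$ and $\sin\zeta=2\sin(\zeta/2)\cos(\zeta/2)$ turns these into $\sin(\zeta/2)\left(A\sin(\zeta/2)-B\cos(\zeta/2)\right)=0$ and $\cos(\zeta/2)\left(A\sin(\zeta/2)-B\cos(\zeta/2)\right)=0$; since $\sin(\zeta/2)$ and $\cos(\zeta/2)$ are never simultaneously zero, the system collapses to the single relation $A(\zeta)\sin(\zeta/2)=B(\zeta)\cos(\zeta/2)$, i.e.\ $\tan(\zeta/2)=B(\zeta)/A(\zeta)$, which is exactly $\zeta\in\Xi_n$. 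This closes the equivalence, with $\zeta=0$ requiring no special treatment since then $A(0)=(2n)!\neq 0$, $B(0)=0$, and all the relations in play reduce to $0=0$.

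Most of the above is routine; the work is concentrated in the second step, namely performing the real/imaginary splitting without sign errors and matching $A$ and $B$ with the factorial--binomial sums in \eqref{NeutralImaginaryCluster}. The one genuinely delicate point I anticipate is the final passage from $A(\zeta)\sin(\zeta/2)=B(\zeta)\cos(\zeta/2)$ to $\tan(\zeta/2)=B(\zeta)/A(\zeta)$ at the boundary points where $\cos(\zeta/2)=0$ or $A(\zeta)=0$: this is most cleanly handled by reading the relation in \eqref{NeutralImaginaryCluster} in its cleared-denominator form $A(\zeta)\sin(\zeta/2)=B(\zeta)\cos(\zeta/2)$, under which the argument is a chain of equivalences valid for all real $\zeta$ with no case distinction; for the literal reading one additionally notes that $A(\zeta)=\cos(\zeta/2)=0$ cannot occur, since the zeros of the rational-coefficient polynomial $A$ are algebraic whereas $\cos(\zeta/2)=0$ forces $\zeta$ to be an odd multiple of $\pi$, and that $P$ (being, up to a positive constant, a reverse Bessel polynomial) has no purely imaginary zero.
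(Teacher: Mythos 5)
Your proof is correct and follows essentially the same route as the paper's: substitute the coefficients \eqref{BetaBMaxMultiplicity} into \eqref{DeltaTilde}, restrict to $z = i\zeta$, and reduce the resulting conjugate-symmetric equation to the single real relation $A(\zeta)\sin(\zeta/2) = B(\zeta)\cos(\zeta/2)$, which is exactly the defining relation of $\Xi_n$ --- the paper gets there by multiplying through by $e^{i\zeta/2}$ and taking an imaginary part, while you split into two real equations and collapse them via half-angle identities, an equivalent piece of bookkeeping. Your closing discussion of the degenerate cases $\cos(\zeta/2) = 0$ or $A(\zeta) = 0$ in the literal $\tan$-and-ratio form of \eqref{NeutralImaginaryCluster} addresses a point the paper passes over silently, and is a correct and welcome addition.
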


\begin{proof}
From \eqref{DeltaTilde} and \eqref{BetaBMaxMultiplicity}, we have
\[
\widetilde\Delta(z) = \sum_{k=0}^n (-1)^{n - k} \frac{(2n - k)!}{k! (n - k)!} z^k + e^{-z} \sum_{k = 0}^n (-1)^{n - 1} \frac{(2n - k)!}{k! (n - k)!} z^k.
\]
Hence, $z = i \zeta$ is a root of $\widetilde\Delta$ if and only if
\[
e^{i \frac{\zeta}{2}}\sum_{k=0}^n (-1)^{k} \frac{(2n - k)!}{k! (n - k)!} (i\zeta)^k = e^{-i\frac{\zeta}{2}} \sum_{k = 0}^n \frac{(2n - k)!}{k! (n - k)!} (i\zeta)^k.
\]
The right-hand side of the above equality is equal to the complex conjugate of the left-hand side. Hence, the above equality is equivalent to
\[
\Im\left(e^{i \frac{\zeta}{2}}\sum_{k=0}^n (-1)^{k} \frac{(2n - k)!}{k! (n - k)!} (i\zeta)^k\right) = 0,
\]
which happens if and only if
\[
\sin\left(\frac{\zeta}{2}\right) \sum_{\ell = 0}^{\floor*{\frac{n}{2}}} (-1)^\ell \frac{(2n - 2\ell)!}{(2\ell)! (n - 2\ell)!} \zeta^{2\ell} = \zeta \cos\left(\frac{\zeta}{2}\right) \sum_{\ell = 0}^{\floor*{\frac{n-1}{2}}} (-1)^\ell \frac{(2n - 2\ell - 1)!}{(2\ell + 1)! (n - 2\ell - 1)!} \zeta^{2\ell}.
\]
This last equality is equivalent to $\zeta \in \Xi_n$, concluding the proof.
\end{proof}

Using Lemmas~\ref{LemmDominancy} and \ref{LemmImaginaryRoots}, we can easily conclude the proof of Theorem~\ref{Dominance}.

\begin{proof}[Proof of Theorem~\ref{Dominance}]
Define $\widetilde\Delta$ from $\Delta$ as in \eqref{DefiDeltaTilde}. For $n>m$, item \ref{ItemB} can be shown by noticing that, if $s$ is a root of $\Delta$ with $s \neq s_0$, then, by \eqref{DefiDeltaTilde}, $z = \tau(s - s_0)$ is a root of $\Delta$ with $z \neq 0$. Hence, by Lemma \ref{LemmDominancy}, $\Re(\tau(s - s_0)) < 0$, showing, since $\tau > 0$, that $\Re(s) < s_0$. Similarly, for $m=n$, the first part of item \ref{ItemD} can be shown by applying first Lemma~\ref{LemmDominancy}, which gives $\Re(s) = s_0$ for any root $s$ of $\Delta$, and the last part of item \ref{ItemD} follows by applying Lemma~\ref{LemmImaginaryRoots}.
\end{proof}

\subsection{Consequences on stability}

The third main result we present in this paper is the following one on the the stability of the trivial solution of \eqref{MainSystTime}, which is an immediate consequence of \eqref{RelationS0ANMinus1Tau}, Theorem~\ref{MaxMult}, and Theorem~\ref{Dominance}.

\begin{theorem}
\label{ExpStab}
Let $n \in \mathbb N^\ast$, $m \in \llbracket 0, n\rrbracket$, $\tau > 0$, and consider the delay-differential equation \eqref{MainSystTime}. Assume that \eqref{Coeffs} is satisfied for some $s_0 \in \mathbb R$. Then the trivial solution of \eqref{MainSystTime} is exponentially stable if and only if $a_{n-1}>-\frac{n\,(m+1)}{\tau}$.
\end{theorem}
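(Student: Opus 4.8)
The plan is to derive Theorem~\ref{ExpStab} directly from the results already established, with essentially no new work beyond combining them and interpreting the sign condition on the dominant root. Recall from the discussion after Proposition~\ref{PropPolyaSzego} (or from \cite[Chapter~1]{Hale1993Introduction}) that the trivial solution of \eqref{MainSystTime} is exponentially stable if and only if the spectral abscissa $\gamma_0 = \sup\{\Re(s) \suchthat \Delta(s) = 0\}$ is strictly negative. So everything reduces to determining the sign of $\gamma_0$ under the standing assumption \eqref{Coeffs}.

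First I would invoke Theorem~\ref{Dominance}. Under \eqref{Coeffs}, in the retarded case $m < n$, the real number $s_0$ is a \emph{strictly} dominant root, so $\gamma_0 = s_0$ and, moreover, the supremum is attained; in the neutral case $m = n$, $s_0$ is a dominant root and every other root $s$ satisfies $\Re(s) = s_0$, so again $\gamma_0 = s_0$. In both cases $\gamma_0 = s_0$, and there is a subtlety only in the neutral case: since there are infinitely many roots on the vertical line $\Re(s) = s_0$ (the cluster indexed by $\Xi_n$ in \eqref{NeutralImaginaryCluster}), one must make sure that exponential stability is still governed by $\gamma_0 < 0$. This is fine because the relevant statement from \cite[Chapter~1, Theorem~6.2 and Corollary~7.2]{Hale1993Introduction} applies to neutral equations as well, and what matters is that $\gamma_0 < 0$ keeps all roots uniformly bounded away from the imaginary axis; conversely, if $\gamma_0 \geq 0$ there is a root with nonnegative real part, precluding exponential decay. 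Thus exponential stability of \eqref{MainSystTime} is equivalent to $s_0 < 0$.

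Second, I would translate $s_0 < 0$ into the asserted inequality on $a_{n-1}$. By the remark following Theorem~\ref{MaxMult}, taking $k = n-1$ in the first equation of \eqref{Coeffs} yields the explicit relation \eqref{RelationS0ANMinus1Tau}, namely
\[
s_0 = -\frac{a_{n-1}}{n} - \frac{m+1}{\tau}.
\]
Hence $s_0 < 0$ if and only if $-\frac{a_{n-1}}{n} - \frac{m+1}{\tau} < 0$, i.e.\ $\frac{a_{n-1}}{n} > -\frac{m+1}{\tau}$, i.e.\ $a_{n-1} > -\frac{n(m+1)}{\tau}$, which is exactly the condition in the statement. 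Combining this with the equivalence from the previous paragraph completes the proof.

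There is essentially no hard step here: the content is entirely in Theorems~\ref{MaxMult} and~\ref{Dominance} (and ultimately in Proposition~\ref{CorZerosKummer} about zeros of Kummer functions). The only point requiring a little care is the neutral case, where one must argue that the presence of an infinite vertical cluster of roots exactly on the line $\Re(s)=s_0$ does not spoil the ``$\gamma_0<0 \iff$ exponential stability'' dichotomy — and indeed it does not, because the roots stay uniformly bounded away from the imaginary axis when $s_0<0$, so I would state this explicitly rather than glossing over it. Everything else is the one-line substitution \eqref{RelationS0ANMinus1Tau}.
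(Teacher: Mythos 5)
Your argument is correct and is exactly the route the paper intends: Theorem~\ref{ExpStab} is stated there as an immediate consequence of \eqref{RelationS0ANMinus1Tau}, Theorem~\ref{MaxMult}, and Theorem~\ref{Dominance}, which is precisely the combination you carry out. Your extra remark on the neutral case (that the infinite cluster of roots on the line $\Re(s)=s_0$ does not spoil the equivalence between $\gamma_0<0$ and exponential stability, since the supremum is attained at $s_0$) is a worthwhile point of care that the paper leaves implicit.
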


\section{Illustrative examples}
\label{SecIllustrative}

PID controllers have been extensively used to control and regulate industrial processes which are typically modeled by reduced-order dynamics. In this section, we shall illustrate how one can tune the controller gains using the GMID property through a couple of comprehensive examples. The first case deals with a retarded equation, while the second one corresponds to a neutral equation.

\subsection{Stabilizing the classical pendulum via delay action}
 Consider the dynamical system modeling a friction-free classical pendulum. The adopted model is studied in \cite{A1999} and, in the sequel, we keep the same notations.
The dynamics of a controlled classical pendulum are governed by the following second-order differential equation
\begin{equation}\label{InvPenNLin}
\ddot \theta(t) + \frac{g}{L}\sin(\theta(t)) = u(t)
\end{equation}
where $\theta(t)$ stands for the angular displacement of the pendulum at time $t$ with respect to the stable equilibrium position, $L$ is the length of the pendulum, $g$ is the gravitational acceleration, and $u(t)$ represents the control law, which stems from an applied external torque.
The problem we consider is to make the open-loop stable equilibrium locally asymptotically stable via the action of the external torque $u$. This suggests first to
linearize \eqref{InvPenNLin} around the trivial equilibrium and get 
\begin{equation}\label{InvPenLin}
\ddot \theta(t) + \frac{g}{L}\,\theta(t) = u(t).
\end{equation}
Next assume that such a system is controlled using a standard delayed PD controller of the form
\begin{equation}\label{DPD}
u(t)=-k_p\,\theta(t-\tau)-k_d\,\dot\theta(t-\tau),
\end{equation}
 with $(k_p,k_d)\in\mathbb{R}^2$. The local stability of the closed-loop system is reduced to study the location of the spectrum of the quasipolynomial 
\begin{equation}
\label{DeltaPendulum}
    \Delta(s)=s^{2}+\frac{g}{L}+\left(k_{d} s +k_{p}\right) {e}^{-\tau  s},
\end{equation}
which is a quasipolynomial of degree 4. Remarks~\ref{REM-1REV}--\ref{REM-2REV} allow concluding that the maximal multiplicity 4 can be reached only by a root of $\Delta$ on the real axis.

In this case, we apply the GMID property to establish the gains of the stabilizing delayed PD controller. As a matter of fact, in this case,  the only admissible quadruple root is given by
\begin{equation}\label{IPQR}
s_0 =  -\frac{\sqrt{2}}{\sqrt{\frac{L}{g}}}, 
\end{equation}
which is achieved if the controllers' gains and the delay $\tau$ are taken as
\begin{equation*}
    k_{d}= 
-\frac{{e}^{-2} \sqrt{2}}{\sqrt{\frac{L}{g}}},\quad k_{p} = 
-\frac{5 \,{e}^{-2} g}{L},\quad\tau = 
\sqrt{2}\, \sqrt{\frac{L}{g}}.
\end{equation*}

With the proposed choice of the gains and the delay, conditions \eqref{Coeffs} are satisfied for the quasipolynomial \eqref{DeltaPendulum}, and thus, by Theorem~\ref{Dominance}, $s_0$ given by \eqref{IPQR} is the spectral abscissa of the closed-loop system \eqref{InvPenLin}--\eqref{DPD}, ensuring then the exponential stability of the trivial solution as announced in Theorem~\ref{ExpStab}.

\begin{remark}
Notice that the GMID consists in  forcing a root to reach its \emph{maximal} multiplicity, which does not allow any degree of freedom in assigning $s_0$, as precised in \eqref{IPQR}. In order to allow for some additional freedom when assigning $s_0$, one can relax such a constraint by forcing the root $s_0$ to have a multiplicity lower than the maximal, and also consider the delay as a free tuning parameter. This motivates the study of a (non-generic) MID property, which was carried out, for instance, in \cite{Boussaada2020Multiplicity} for second-order systems.

\begin{figure}[ht]
\centering
 \includegraphics[width=0.5\textwidth]{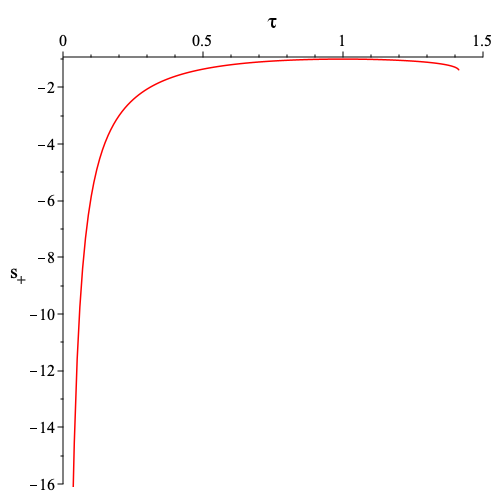}
\caption{The behavior of the triple root (spectral abscissa) of \eqref{DeltaPendulum} at $s=s_+$ as a function of the free delay parameter $0<\tau<\sqrt{2\,L/g}$ for $L/g=1$.}
\label{Pendulum-TRIPLE}
\end{figure}

For instance, the only admissible triple roots of \eqref{DeltaPendulum} are given by
\begin{equation*}
s_\pm =  \frac{-2\pm\sqrt{-\frac{g \,\tau^{2}}{L}+2}}{\tau},
\end{equation*}
which are achieved if $\tau < \sqrt{{2\,L}/{g}}$ and the controllers' gains are taken as
\begin{equation*}
k_{d} = 
\frac{2 \left(\tau  s_{\pm}+1\right) {\mathrm e}^{\tau  s_{\pm}}}{\tau}, \qquad k_{p} = 
\frac{2 \left(g \,\tau^{2}/L+5 \tau  s_{\pm}+3\right) {\mathrm e}^{\tau  s_{\pm}}}{\tau^{2}}.
\end{equation*}
Following \cite[Theorem 4.2]{Boussaada2020Multiplicity}, the triple root at $s=s_+$ is the rightmost root of \eqref{DeltaPendulum}. Thus the delay, if seen as a tuning  parameter, allows to assign the rightmost root at $s=s_+$ arbitrarily large (in absolute value) for small delay, as illustrated in Figure \ref{Pendulum-TRIPLE}.
\end{remark}
 
\subsection{Feedback stabilization for a scalar conservation law
with PI boundary control}

 As an illustrative example of the GMID for neutral equations, we consider the problem of stabilization of solutions of a partial differential  equation of hyperbolic type.
 More precisely, we revisit the problem of exponential stabilization of the following scalar conservation law proposed in \cite{Coron2015Feedback}:
\begin{equation}\label{ConservLaw}
\partial_t \varphi(t,x)+\lambda\,\partial_x \varphi(t,x)=0, \quad t\in[0,\,\infty), \quad x\in(0,\,L),
\end{equation}
where $L > 0$ and $\varphi(t, x)$ denotes the system state at position $x \in (0, L)$ and in time $t \in [0,+\infty)$. As considered in \cite{Coron2015Feedback}, the value $\lambda$, which denotes the velocity of propagation, is assumed to be a positive constant.
Equation \eqref{ConservLaw} is accompanied with a  boundary condition under the form of a PI controller:
\begin{equation}\label{BC}
    \varphi(t,\,0)=k_p\,\varphi(t,L)+k_i\,\int_0^t \varphi(\nu,L) \diff \nu,
\end{equation}
where $k_p$ and $k_i$ are the feedback parameters representing proportional and integral control gains. Applying the Laplace transform to both sides of the boundary condition and multiplying by $s$ one obtains the closed-loop characteristic function
\begin{equation}\label{CEConLaw}
\Delta(s)=s-(k_i+k_p\,s)e^{-\frac{L}{\lambda}\,s},
\end{equation}
which corresponds to the characteristic function of a first-order neutral equation, that is, a function under the form \eqref{Delta} with $m=n=1$. In this case, the degree $\mathscr{D}_{PS}$ of $\Delta$ is equal to $3$ and, as mentioned in Remarks~\ref{REM-1REV}--\ref{REM-2REV}, the maximal multiplicity can be achieved only by a root on the real axis.

Next, by exploiting the results of Theorem~\ref{MaxMult}, Theorem~\ref{Dominance}, and Theorem~\ref{ExpStab}, we conclude that forcing a triple spectral value guarantees its dominance as a root of \eqref{CEConLaw}, and then the exponential stability of solutions of \eqref{ConservLaw}--\eqref{BC}. More precisely, by tuning the controller gains as
\begin{equation}
\label{CEkpki}
k_{p} =- {e}^{-2},\;\quad k_{i} =- \frac{4 \,{e}^{-2}\,\lambda}{L},
\end{equation} one achieves the unique admissible triple root, which is $s_0= -\frac{2\,\lambda}{L}$ and corresponds to the decay rate of solutions of \eqref{ConservLaw}--\eqref{BC}. Furthermore, as shown in Theorem~\ref{Dominance}\ref{ItemD}, the set of roots of $\Delta$ is $\left\{s_0 + i \frac{\lambda\,\zeta}{L} \suchthat \zeta \in \Xi_1\right\}$ where  $\Xi_1 = \left\{\zeta \in \mathbb R \suchthat \tan\left(\frac{\zeta}{2}\right) = \frac{\zeta}{2}\right\}$. Figure~\ref{FConsLaw}(a) shows the result of a numerical computation of the roots of \eqref{CEConLaw} with the parameters \eqref{CEkpki}, while Figure~\ref{FConsLaw}(b) shows the solution of \eqref{ConservLaw}--\eqref{BC} in the case $\frac{L}{\lambda} = 1$ with an initial condition $\varphi(0, x) = \sin(2 \pi x)$.

\begin{figure}[ht]
\centering
\begin{tabular}{@{} >{\centering} m{0.5\textwidth} @{} >{\centering} m{0.5\textwidth} @{}}
\resizebox{0.5\textwidth}{!}{\input{Figures/transport_eqn_spectrum.pgf}} & \includegraphics[width=0.5\textwidth]{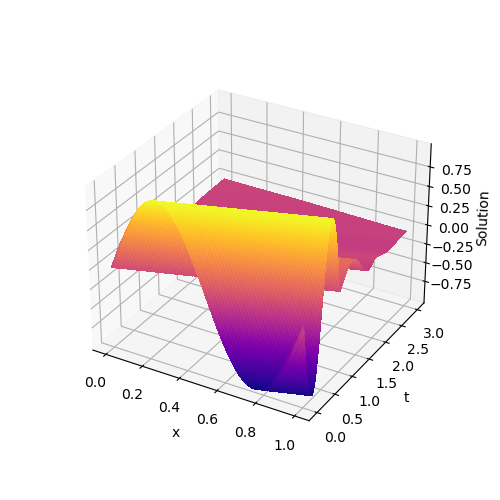} \tabularnewline
(a) & (b) \tabularnewline
\end{tabular}
\caption{(a) Spectrum distribution of \eqref{CEConLaw} and (b) solution of \eqref{ConservLaw} with initial condition $\varphi(0, x) = \sin(2 \pi x)$, with $\frac{L}{\lambda} = 1$ and parameters $k_p$ and $k_i$ satisfying \eqref{CEkpki}.}
\label{FConsLaw}
\end{figure}

\subsection{New features of P3\texorpdfstring{$\delta$}{delta}}

P3$\delta$, whose name stands for \emph{Partial pole placement via delay action}, is a Python software with a friendly user interface for the design of parametric stabilizing feedback laws with time delays, thanks to properties of the distribution of quasipolynomials' zeros. It exploits mainly the MID/GMID property as well as the coexisting real roots-induced-dominancy or CRRID for short, established in \cite{BoussaadaPartial, BedouheneReal ,Amrane2018Qualitative}. Initially, P3$\delta$ has been established for the control design for retarded differential equations. Based on the results of this work as well as the references \cite{Ma,Benarab2020MID,MBNC-2021-MTNS}, the software has been updated and is now able to treat linear neutral functional differential equations as well.

P3$\delta$ is freely available for download on \url{https://cutt.ly/p3delta}, where installation instructions, video demonstrations, and the user guide are also available. Interested readers may also contact directly any of the authors of the paper. Since its creation, P3$\delta$ had vocation to be available to the greatest number and on all possible platforms. The current version of the software is available in local executable version as well as an online version ready to use in one click. The online version of P3$\delta$ is hosted on servers thanks to the \emph{Binder} service \cite{project_jupyter-proc-scipy-2018}. \emph{Binder} allows to create instances of personalized computing environment directly from a \emph{GitHub} repository
that can be employed and shared by users. The \emph{Binder} service is free to use and is powered by \emph{BinderHub}, an open-source tool that deploys the service in the cloud. The online version of P3$\delta$ is written in Python 
and structured as a \emph{Jupyter Notebook}, an open document format which can contain live code, equations, visualizations, and text.
The Jupyter Notebook is completed by a friendly user interface built using interactive widgets from Python's \texttt{ipywidgets} module.

\subsection{Further remarks on the maximal damping}

Although there exists a root locus approach for single delay time-delay systems \cite{Krall1970} able to monitor the variation of the spectrum with respect to parameters variation, then  allowing to characterize the spectral abscissa, unfortunately this method  doesn't recover the multiple root case due to the lack of regularity with respect to parameter's variation.

To the best of the authors' knowledge, the problem of  the selection of the system's parameters  guaranteeing the maximum damping of reduced-order time-delay systems solutions goes back to the almost forgotten contributions of Pinney in the 1960s \cite{Pinney1958Ordinary}, where a complex analysis method based on Cauchy index theorem coupled with the D-partition method \cite{Neimark1948Structure} are used. Furthermore, the link between a spectral value of maximal multiplicity and the maximum damping has been emphasized  for first- and second-order DDEs, however without providing explicit proofs. As a matter of fact, it has been observed that the minimization of the spectral abscissa suggests the ``exhaustion'' of the equation parameters, which is achieved in Equation \eqref{Coeffs} from Theorem~\ref{MaxMult}.

Next, in the context of the design of output feedback able to achieve a fast stabilization of finite-dimensional systems,  an optimization technique called \emph{Convex hull technique} has been proposed in \cite{Chen1979Output}, where the existing link between the maximum damping and the characteristic root of maximum multiplicity has been established, see also \cite{Blondel2012Explicit}. More recently, to recover such a link for second-order delay systems, an optimization approach has been used in  \cite{Ramirez2016Design}. Notice that the extension of such optimization methods to neutral DDEs or even to retarded DDEs of fixed but higher orders (greater than 3) systems is a quite complicated question especially in the presence of free parameters.

\section{Concluding remarks}
In this paper, we have further investigated the \emph{multiplicity-induced-dominancy property\/}  for single-delay linear functional differential equations. Thanks to the reduction of the corresponding characteristic function to an appropriate integral representation that corresponds to some degenerate Kummer hypergeometric function,  we have shown the validity of the \emph{generic multiplicity-induced-dominancy (GMID) property\/};  that is, the characteristic spectral values of maximal multiplicity are necessarily dominant for retarded as well as neutral delay-differential equations of arbitrary order. 

Notice that the MID  property may hold even when it is about a spectral value with a strictly-intermediate admissible multiplicity, as shown in \cite{Boussaada2020Multiplicity} for second-order plants and in \cite{Balogh21}  for $n$\textsuperscript{th}-order retarded equations admitting a real spectral value with multiplicity $n+1$ and a  finite dimensional part admitting exclusively real modes. However, in all generality, the limits of the MID property remain an open question.  

All these arguments make our constructive method based on confluent hypergeometric functions applied to linear delay-differential equations (retarded as well as neutral) of arbitrary order particularly attractive and advantageous in many ways and, in particular, in control design problems involving free parameters to be tuned. As shown through some illustrative examples concerning classical PID control schemes, the partial pole placement method that is based on the use of GMID property may appear as a good alternative to the existing control methodologies. Indeed, the corresponding controllers are of ``low-complexity'' (i.e., reduced number of parameters to be tuned) and there exist explicit guarantees on the location of the remaining characteristic roots if the GMID is reached.

\section*{Acknowledgments}
The authors warmly thank the Editor and the Reviewer for  their comments and suggestions which contributed to improve the overall quality of the paper.\\
This work is supported by a public grant overseen by the French National Research Agency (ANR) as part of the ``Investissement d'Avenir'' program, through the iCODE project funded by the IDEX Paris-Saclay, ANR-11-IDEX-0003-02. The second author was also partially supported by ANR PIA funding number ANR-20-IDEES-0002.

\bibliographystyle{abbrv}
\bibliography{Bib}

\end{document}